\definecolor{linkblue}{RGB}{1,1,190}
\definecolor{citered}{RGB}{190,1,1}
\theoremstyle{plain}
\newtheorem{theorem}{\bf Theorem}[section]
\newtheorem{proposition}[theorem]{\bf Proposition}
\newtheorem{lemma}[theorem]{\bf Lemma}
\theoremstyle{definition}
\newtheorem{conjecture}[theorem]{\bf Conjecture}
\numberwithin{equation}{section}
\makeatletter\@namedef{subjclassname@2020}{\textup{2020} Mathematics Subject Classification}\makeatother
\begin{document}
\title[On counterexamples to Mordell's Conjecture and the AAC-Conjecture]{On counterexamples to Mordell's Pellian Equation Conjecture and the AAC-Conjecture: a non-computer based approach}
\author{Andreas Reinhart}
\address{Institut f\"ur Mathematik und Wissenschaftliches Rechnen, Karl-Franzens-Universit\"at Graz, NAWI Graz, Heinrichstra{\ss}e 36, 8010 Graz, Austria}
\email{andreas.reinhart@uni-graz.at}
\keywords{fundamental unit, Pell equation, quadratic number field}
\subjclass[2020]{11R11, 11R27}
\thanks{This work was supported by the Austrian Science Fund FWF, Project Number P36852-N}

\begin{abstract}
In this note, we discuss recently discovered counterexamples to Mordell's Pellian Equation Conjecture and the Ankeny-Artin-Chowla-Conjecture. We provide a verification of the counterexample to Mordell's Pellian Equation Conjecture that can be checked with marginal computer assistance.
\end{abstract}

\maketitle

\section{Introduction and main result}\label{1}

We fix notation which is needed to formulate the Ankeny-Artin-Chowla-Conjecture and Mordell's Pellian Equation Conjecture.

\smallskip
Let $\mathbb{P}$, $\mathbb{N}$, $\mathbb{N}_0$, $\mathbb{Z}$, $\mathbb{Q}$ denote the sets of prime numbers, positive integers, nonnegative integers, integers and rational numbers, respectively. For each $p\in\mathbb{P}$ and each $a\in\mathbb{Z}\setminus\{0\}$, let ${\rm v}_p\left(a\right)$ be the $p$-adic exponent of $a$ (i.e., the largest $k\in\mathbb{N}_0$ with $p^k\mid a$). Let $d\in\mathbb{N}_{\geq 2}$ be squarefree, let $K=\mathbb{Q}\left(\sqrt{d}\right)$, let $\mathcal{O}_K$ be the ring of algebraic integers of $K$, let $\mathcal{O}_d=\mathbb{Z}+d\mathcal{O}_K$ and let $\varepsilon>1$ be the fundamental unit of $\mathcal{O}_K$. By $\mathcal{O}_K^{\times}$ (respectively $\mathcal{O}_d^{\times}$) we denote the unit group of $\mathcal{O}_K$ (respectively of $\mathcal{O}_d$). Let ${\rm N}:K\rightarrow\mathbb{Q}$ defined by ${\rm N}\left(a+b\sqrt{d}\right)=a^2-db^2$ for each $a,b\in\mathbb{Q}$ be the norm map. Moreover, for each nonzero ideal $I$ of $\mathcal{O}_K$, let ${\rm N}\left(I\right)=|\mathcal{O}_K/I|$ be the ideal norm of $I$. We set

\[
\omega=\begin{cases}\sqrt{d}&\textnormal{if }d\equiv 2,3\mod 4,\\\frac{1+\sqrt{d}}{2}&\textnormal{if }d\equiv 1\mod 4.\end{cases}
\]

\smallskip
Observe that $\mathcal{O}_K=\mathbb{Z}[\omega]=\mathbb{Z}\oplus\omega\mathbb{Z}$. Let $p\in\mathbb{P}$. Then $p$ is called {\it ramified} in $\mathcal{O}_K$ if $p\mathcal{O}_K$ is the square of a maximal ideal of $\mathcal{O}_K$. Moreover, we say that $p$ {\it splits} in $\mathcal{O}_K$ if $p\mathcal{O}_K$ is the product of two distinct maximal ideals of $\mathcal{O}_K$.

We recall two (long standing) conjectures.

\begin{conjecture}[Ankeny-Artin-Chowla-Conjecture]\label{Conjecture 1.1} Let $x,y\in\mathbb{N}_0$ be such that $\varepsilon=x+y\omega$. If $d\in\mathbb{P}$ and $d\equiv 1\mod 4$, then $d\nmid y$.
\end{conjecture}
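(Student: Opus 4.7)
Conjecture \ref{Conjecture 1.1} is a famous open problem which, as the abstract signals, has recently been shown to admit counterexamples; any direct proof attempt is therefore doomed. I sketch nonetheless the classical Ankeny-Artin-Chowla strategy, which reduces the statement to a Bernoulli nonvanishing.

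The plan is first to convert the divisibility condition $d\mid y$ into a congruence for $\log\varepsilon$ modulo $d$. For prime $d\equiv 1\pmod 4$, genus theory gives $\mathrm{N}(\varepsilon)=-1$, and the minimal polynomial $X^2-(2x+y)X-1$ of $\varepsilon$, combined with elementary manipulation in $\mathcal{O}_K/d\mathcal{O}_K$, shows that $d\mid y$ forces $(2x+y)^2\equiv -4\pmod d$ and hence controls the reduction of $\varepsilon$ modulo $d$, and thereby its regulator modulo $d$. Applying the analytic class number formula
\[
2h\log\varepsilon=\sqrt{d}\,L(1,\chi_d),
\]
with $\chi_d=\left(\tfrac{\cdot}{d}\right)$ and $h$ the class number of $K$, and the classical expression of $L(1,\chi_d)$ modulo $d$ in terms of Bernoulli numbers (via the Hurwitz zeta function and Kummer-type congruences), the conjecture becomes equivalent to the classical Ankeny-Artin-Chowla criterion
\[
B_{(d-1)/2}\not\equiv 0\pmod d.
\]

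The main obstacle, and essentially the whole difficulty, is proving this nonvanishing. No algebraic, Iwasawa-theoretic, or $p$-adic mechanism currently known forces $B_{(d-1)/2}$ to be a $d$-adic unit; standard heuristics predict failure with probability roughly $1/d$ per prime $d$, hence infinitely many exceptional primes. Any reasonable plan along these lines must therefore collapse at precisely this step, and in particular it must fail for the exceptional primes underlying the counterexamples discussed in the abstract.
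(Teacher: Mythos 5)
You have correctly recognized that Conjecture~\ref{Conjecture 1.1} is stated in the paper precisely as a (long-standing) conjecture that has now been \emph{disproved}: the paper offers no proof of it, and its entire purpose is to discuss the recently found counterexamples (the AAC counterexample itself being the subject of the companion reference, with this paper carrying out the detailed verification only for the $d\equiv 3\bmod 4$ case of Mordell's conjecture). So there is no proof in the paper to compare against, and your refusal to supply one is the right answer. Your sketch of the classical Ankeny--Artin--Chowla reduction is also accurate as historical context: for prime $d\equiv 1\bmod 4$ one indeed has ${\rm N}(\varepsilon)=-1$, and the 1952 congruence $\frac{uh}{t}\equiv B_{(d-1)/2}\pmod d$ (with $\varepsilon=(t+u\sqrt d)/2$ and $d\nmid th$) makes $d\mid u$ equivalent to $d\mid B_{(d-1)/2}$, a nonvanishing that heuristically fails for a density of primes and provably fails for the counterexample prime. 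The only caveat is that this reduction plays no role anywhere in the paper, whose verification strategy is instead ideal-theoretic (constructing a unit $\eta$ as an explicit $S$-unit quotient and showing $\eta\in\mathcal{O}_d^{\times}\setminus\{1\}$); but since the statement under review is the conjecture itself and not the verification of its counterexample, that difference is immaterial here.
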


\begin{conjecture}[Mordell's Pellian Equation Conjecture]\label{Conjecture 1.2} Let $x,y\in\mathbb{N}_0$ be such that $\varepsilon=x+y\omega$. If $d\in\mathbb{P}$ and $d\equiv 3\mod 4$, then $d\nmid y$.
\end{conjecture}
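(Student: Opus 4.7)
The strategy would be to argue by contradiction: assume $d$ is prime with $d \equiv 3 \pmod 4$ and $d \mid y$, and try to derive a contradiction from the $d$-adic arithmetic of $\mathcal{O}_K = \mathbb{Z}[\sqrt d]$.

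First I would establish that $\mathrm{N}(\varepsilon) = 1$ in this regime: reduction of $x^{2} - dy^{2} = \pm 1$ modulo $4$ with $d \equiv 3 \pmod 4$ excludes the value $-1$, since $x^{2} \in \{0,1\}$ and $dy^{2}\in\{0,3\}\pmod 4$. Hence $x^{2} - dy^{2} = 1$, and the hypothesis $d \mid y$ gives $d^{2} \mid (x-1)(x+1)$. Because $\gcd(x-1,x+1)\mid 2$ and $d$ is odd, exactly one of $x\pm 1$ is divisible by $d^{2}$; the two cases are treated symmetrically, so I assume $d^{2}\mid x-1$. Writing $y = dt$ and $x = 1 + d^{2}s$ and substituting into $x^{2}-dy^{2}=1$ produces, after cancelling $d^{2}$, the identity $2s + d^{2}s^{2} = dt^{2}$. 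This forces $d\mid 2s$, and thus $d\mid s$ since $d$ is odd. Consequently $d^{3}\mid x-1$.

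The plan from here is to iterate the descent using the unique prime $\mathfrak{p} = (\sqrt d)$ of $\mathcal{O}_K$ above $d$, which satisfies $\mathfrak{p}^{2}=(d)$, and to track $\varepsilon$ through the principal unit filtration $1+\mathfrak{p}^{k}$. If the iteration could be shown to yield $d^{k}\mid x-1$ for arbitrarily large $k$, this would force $\varepsilon\geq d^{k/2}$, contradicting the analytic upper bound $\log\varepsilon=O(\sqrt d\,\log d)$ coming from the class number formula for $\mathbb{Q}(\sqrt d)$. The endgame would therefore be a size comparison between the $\mathfrak{p}$-adic valuation of $\varepsilon-1$ and the archimedean regulator.

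The hard part is that the descent stalls after one step: writing $s = ds'$, the identity becomes $2s' + d^{3}s'^{2} = t^{2}$, which no longer forces $d\mid s'$ but only the congruence $t^{2}\equiv 2s'\pmod{d^{3}}$. To bridge the gap one would need a substantive extra input --- either algebraic, relating the image of $\varepsilon$ in $(\mathcal{O}_K/\mathfrak{p}^{k})^{\times}$ to the $d$-part of $\mathrm{Cl}(\mathcal{O}_K)$ via class field theory, or analytic, an Ankeny-Artin-Chowla-type identity linking the residue of $y/d$ modulo $d$ to a Bernoulli-like invariant that can be shown to be nonzero mod $d$. Constructing such a bridge in general is precisely the difficulty that has kept the conjecture open; the balance of the present paper takes the opposite tack and produces an explicit $d$ for which the conjectured non-divisibility fails, so the proof plan above is recorded only to highlight where the obstruction lies.
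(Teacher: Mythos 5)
The statement you were asked to prove is false, and the paper in question is devoted to \emph{disproving} it: Theorem~\ref{Theorem 1.3} exhibits $d=39028039587479$ as an explicit prime $d\equiv 3\bmod 4$ with $d\mid y$. So no correct proof of Conjecture~\ref{Conjecture 1.2} exists, and your proposal rightly does not produce one. Your preliminary reductions are sound as far as they go: ${\rm N}(\varepsilon)=1$ follows from $d\equiv 3\bmod 4$, the hypothesis $d\mid y$ does give $d^{3}\mid x^{2}-1$ and hence (after the coprimality observation) $d^{3}\mid x\mp 1$, and the substitution $x=1+d^{2}s$, $y=dt$ does yield $2s+d^{2}s^{2}=dt^{2}$ and then $2s'+d^{3}s'^{2}=t^{2}$, at which point the descent genuinely stalls. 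You correctly diagnose that bridging this gap would require a substantive new input, and you correctly note that the paper ``takes the opposite tack.'' That self-awareness is the right conclusion; the only thing to flag is that what you have written is an explanation of why a proof strategy fails rather than a proof, and it should not be presented as a proof attempt of the statement.

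For comparison, the paper's actual argument for the counterexample avoids continued fractions and any descent on $\varepsilon$ itself. It first certifies $d\in\mathbb{P}$ via a Pocklington-type criterion (Proposition~\ref{Proposition 2.1}) with the factor $a=3617\cdot 4021$ of $d-1$. It then builds a unit $\eta=z/n$ where $z=\prod_{i}(a_i+\sqrt{d})^{b_i}$ and $n=\prod_j c_j^{d_j}$, verifies $\eta\in\mathcal{O}_K^{\times}$ by matching the $P_j$- and $\overline{P}_j$-adic valuations of $z\mathcal{O}_K$ and $n\mathcal{O}_K$ through explicit prime-ideal factorizations, checks $\eta\in\mathcal{O}_d^{\times}$ and $\eta\neq 1$ by congruences modulo $d\mathcal{O}_K$ and $3\mathcal{O}_K$, and finally uses the size bound $\varepsilon^{-d}<\eta<\varepsilon^{d}$ together with $(\mathcal{O}_K^{\times}:\mathcal{O}_d^{\times})\mid d$ to force $\varepsilon\in\mathcal{O}_d$, which is equivalent to $d\mid y$. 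If you want to salvage something from your write-up, the observation that $d\mid y$ already forces $d^{3}\mid x\mp 1$ is a correct (and standard) consequence of the counterexample, but it is not a route to the conjecture.
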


We briefly discuss the history and the significance of both conjecture. The Ankeny-Artin-Chowla-Conjecture was first mentioned in 1952 (see \cite[page 480]{An-Ar-Ch52}) and it was subsequently investigated in a multitude of papers \cite{Ag16,Be-Mo24,Ha-Jo18,Ha01,Lu92,Mo60,Si-Sh24,St-Wi88}. Mordell's Pellian Equation Conjecture was stated by L. J. Mordell in 1961 (see \cite[page 283]{Mo61}) and independently by A. A. Kiselev and I. Sh. Slavutski\u{\i} in 1959 (see \cite{Ki-Sl59}). It was investigated in various papers and by many authors (for instance, see \cite{Be-Mo24,Ch-Sa19,Lu91,Si-Sh24,St-Wi88}).

\smallskip
Both conjectures were recently disproved by counterexamples which were found by a long numerical computer search \cite{Re24,Rei24}. For the methods, algorithms and programs that were used to find (and check) these counterexamples, we refer to \cite[Section 3]{Re24} and \cite[Section 2]{Rei24}. Note that the algorithms (that were used in \cite{Re24,Rei24} to check the counterexamples) require a multitude of computational steps and make (at least to some extent) use of continued fraction expansions. For instance, the fundamental unit of $\mathcal{O}_K$ for $d$ in Theorem~\ref{Theorem 1.3} is larger than $10^{1880000}$ and the period length of the continued fraction expansion of $\sqrt{d}$ is $3650856$.

\smallskip
The purpose of this note is to demonstrate how to verify the given counterexamples with manageable effort and only minor computer assistance. We do this in detail for the counterexample to Mordell's Pellian Equation Conjecture. The verification of the counterexample to the AAC-Conjecture can be done along similar lines. Also note that we do not use continued fraction expansions, and thus we get additional independent verifications of the counterexamples.

\smallskip
Note that both the Ankeny-Artin-Chowla-Conjecture and Mordell's Pellian Equation Conjecture have ties with some problems in (commutative) algebra. Let $\mathcal{O}$ be an order in $K$. Then $\mathcal{O}$ is said to have {\it torsionfree cancellation} if for all torsionfree finitely generated $\mathcal{O}$-modules $A,B,C$ with $A\oplus B\cong A\oplus C$, it follows that $B\cong C$. Moreover, $\mathcal{O}$ is said to have {\it mixed cancellation} if for all finitely generated $\mathcal{O}$-modules $A,B,C$ with $A\oplus B\cong A\oplus C$, we have that $B\cong C$. Let $d\in\mathbb{P}$ and let $x,y\in\mathbb{N}_0$ be such that $\varepsilon=x+y\omega$. It is well-known that $\mathcal{O}_d$ has torsionfree cancellation if and only if $d\nmid y$ \cite[Theorem 4.9]{W84}. In particular, it follows from \cite[Theorem 4.9]{W84} and \cite[Theorem 5.12]{Ha07} that if $d\geq 7$, then $\mathcal{O}_d$ has torsionfree cancellation, but not mixed cancellation if and only $d\nmid y$.

\smallskip
We also want to mention a problem from factorization theory that is connected to the condition $d\mid y$. It was recently studied in \cite{Re23,Re24}. For a profound introduction to factorization theory see \cite{Ge-HK06a}. Let $\mathcal{O}$ be an order in $K$. A nonzero nonunit of $\mathcal{O}$ that cannot be written as a product of two nonunits of $\mathcal{O}$ is called an {\it atom} of $\mathcal{O}$. It is well-known that every nonzero nonunit of $\mathcal{O}$ is a finite product of atoms of $\mathcal{O}$ (see \cite{Ge-HK06a}). Let $x\in\mathcal{O}$ be a nonzero nonunit. If $\ell\in\mathbb{N}$ is such that $x$ is a product of $\ell$ atoms of $\mathcal{O}$, then $\ell$ is called a {\it length} of $x$. Let $\mathsf{L}(x)$ be the set of lengths of $x$. Set $L=\min\{r-s\mid y\in\mathcal{O}$ is a nonzero nonunit, $r,s\in\mathsf{L}(y), s<r\}$ (note that we use $\min\emptyset=0$ here for convenience). It is known (see \cite{Re23}) that $L\leq 2$. We say that $\mathcal{O}$ is {\it unusual} if $L=2$. The condition $d\mid y$ determines in many cases whether $K$ has an unusual order (see \cite[Proposition 2.4 and Theorem 2.5]{Re24}).

\smallskip
For a survey on the Conjectures~\ref{Conjecture 1.1} and~\ref{Conjecture 1.2} and further discussion of their importance, we refer to \cite{Sl04}. For a more detailed study of the property $d\mid y$ for arbitrary squarefree $d\in\mathbb{N}_{\geq 2}$, see \cite{Re24}. Next, we present the main result of this note.

\begin{theorem}\label{Theorem 1.3}
The number $d=39028039587479$ is a counterexample to Mordell's Pellian Equation Conjecture.
\end{theorem}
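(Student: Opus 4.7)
The plan is to reduce the divisibility $d \mid y$ to a congruence that can be verified without ever computing the gigantic fundamental unit $\varepsilon$ itself. First, dispatch the easy hypotheses: $d\equiv 3\pmod 4$ is read off from the decimal digits, primality of $d$ is established by a standard test (tractable for $d\approx 3.9\cdot 10^{13}$), and consequently $\omega=\sqrt d$, $\mathcal O_K=\mathbb Z[\sqrt d]$, and $\mathrm N(\varepsilon)=1$ (since $-1$ is not a square modulo a prime $d\equiv 3\pmod 4$). Writing $\varepsilon=x+y\sqrt d$, the task is to prove $d\mid y$.

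The key algebraic reduction is to work in the finite residue ring $R:=\mathcal O_K/d\mathcal O_K\cong\mathbb F_d[T]/(T^2)$, where $T$ is the image of $\sqrt d$ and satisfies $T^2=0$ (as $d=(\sqrt d)^2$ in $\mathcal O_K$). Since $x^2=1+dy^2$ forces $\bar x\in\mathbb F_d^{\times}$, the binomial expansion gives, for every $n\in\mathbb Z$,
\[
(\bar x+\bar y\,T)^{n}\;=\;\bar x^{\,n}+n\,\bar x^{\,n-1}\bar y\,T\qquad\text{in }R.
\]
Consequently, any unit $\eta=\varepsilon^{n}=a+b\sqrt d$ satisfies $\bar b\equiv n\bar x^{\,n-1}\bar y\pmod d$, and therefore
\[
d\mid b \;\Longleftrightarrow\; d\mid ny.
\]
It thus suffices to exhibit a single unit $\eta\in\mathcal O_K^{\times}$ whose $b$-coefficient is divisible by $d$, and to verify that the exponent $n$ with $\eta=\pm\varepsilon^{n}$ (unique by Dirichlet's unit theorem) is coprime to $d$.

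The body of the proof would then import $\eta$ from the data of \cite{Re24} in a compact power-product form $\eta=\prod_i\gamma_i^{e_i}$ with algebraic integers $\gamma_i\in\mathcal O_K$ of small norm, supported on a short list of primes that ramify or split in $\mathcal O_K$. Two self-contained checks then remain: (i) the integer identity $\prod_i\mathrm N(\gamma_i)^{e_i}=1$, certifying $\eta\in\mathcal O_K^{\times}$; and (ii) the congruence $b\equiv 0\pmod d$, obtained by repeated-squaring reduction of $\eta$ modulo $d\mathcal O_K$ inside the two-dimensional $\mathbb F_d$-algebra $R$, which is elementary modular arithmetic on pairs in $\mathbb F_d^{2}$. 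To control $n$, the trivial lower bound $\varepsilon\ge 1+\sqrt d$ yields $\varepsilon^{d}>d^{d/2}$; since $\log|\eta|\le\sum_i e_i\log|\gamma_i|$ is moderate by compactness of the representation, the comparison $\log|\eta|<\tfrac{d}{2}\log d$ is enormously slack, which forces $n<d$ and, as $d$ is prime, $\gcd(n,d)=1$. The main obstacle, I expect, is step (i): packaging the raw numerical data of the counterexample into a short, humanly checkable certificate for the multiplicative identity $\prod_i\mathrm N(\gamma_i)^{e_i}=1$; once this skeleton is in place, the congruence check (ii) and the bound on $n$ follow by routine computation.
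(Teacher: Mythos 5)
Your overall strategy coincides with the paper's: build an explicit unit $\eta$ as a power product, verify $\eta\in\mathcal{O}_K^{\times}$ and that its $\sqrt d$-coefficient is divisible by $d$, then bound the exponent $n$ with $\eta=\varepsilon^{n}$ by $|n|<d$ so that primality of $d$ forces $d\mid y$. Your reduction in $\mathcal{O}_K/d\mathcal{O}_K\cong\mathbb{F}_d[T]/(T^2)$ is a concrete and correct rephrasing of the paper's argument via the index $\left(\mathcal{O}_K^{\times}:\mathcal{O}_d^{\times}\right)\mid d$. However, your step (i) contains a genuine gap: the identity $\prod_i\mathrm{N}\left(\gamma_i\right)^{e_i}=1$ does \emph{not} certify $\eta\in\mathcal{O}_K^{\times}$. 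Since the $\gamma_i$ are nonunit algebraic integers, each $|\mathrm{N}(\gamma_i)|>1$, so the norm identity forces some $e_i<0$; then $\eta=\prod_i\gamma_i^{e_i}$ is a priori only an element of $K^{\times}$ of norm $\pm 1$, and such elements need not be algebraic integers at all (e.g.\ $(3+\sqrt 2)/(3-\sqrt 2)=(11+6\sqrt 2)/7$ has norm $1$ but is not in $\mathcal{O}_K$). What must actually be certified is the equality of the principal ideals generated by numerator and denominator. This is precisely the bulk of the paper's proof: each $\left(a_i+\sqrt d\right)\mathcal{O}_K$ is factored into explicit split prime ideals $P_j,\overline{P}_j$ above small primes $c_j$ (Equations, parts A--D), and the $P_j$- and $\overline{P}_j$-valuations of the numerator are summed and matched against those of $\prod_j c_j^{d_j}$ (part E). The norm identity alone would only match ${\rm v}_{P_j}+{\rm v}_{\overline{P}_j}$, not the two valuations separately, and so cannot rule out that $\eta$ has a pole at $P_j$ compensated by a zero at $\overline{P}_j$.

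A second, smaller gap: your conclusion $\gcd(n,d)=1$ requires $n\neq 0$, i.e.\ $\eta\neq\pm 1$, which you never address; if $\eta=\pm1$ the congruence $d\mid ny$ is vacuous. The paper handles this by a separate computation (Claim 4) showing $z\equiv\sqrt d\bmod 3\mathcal{O}_K$, hence $\eta\notin\mathbb{Q}$. You would need some analogous certificate. With these two repairs --- replacing the norm identity by a prime-ideal matching certificate, and adding a nontriviality check --- your skeleton becomes the paper's proof.
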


The proof of Theorem~\ref{Theorem 1.3} in Section~\ref{3} was created with computer assistance (and with the help of the programs Mathematica 12.0.0 and Pari/GP 2.15.2). We used the command bnfunits(bnfinit(X${}^\wedge$2-d,1)) in Pari/GP (together with some elementary algebraic manipulations) to obtain the values in the table (with the entries $a_i$, $b_i$, $c_i$, $d_i$, $e_i$ and $f_i$) below.

Observe that (very dedicated) arithmeticians may be able to verify this proof without computer assistance. We define an arithmetical operation as {\it hard} if it is a multiplication of integers $a$ and $b$ with $|a|\geq 10^3$, $|b|\geq 10^3$ and $|ab|\geq 10^{10}$. All other arithmetical operations in this note (additions, subtractions and multiplications of integers) are defined to be {\it easy}. We define additions and subtractions where (at least) one of the members is a single digit integer as {\it trivial}. A rough estimate yields that there are approximately 550 hard operations and about 2200 (nontrivial) easy operations in the proof of Theorem~\ref{Theorem 1.3} below. If we assume that (the verification of) a hard operation takes about 15 minutes (on average) and (the verification of) an easy (but nontrivial) operation takes about 3 minutes (on average), then a verification of the proof below is doable in about 250 hours (without computers). If all arithmetical operations are done with computers, then it should only take a modest amount of time (to check all the details).

\section{Preliminaries}\label{2}

We recall a variant of a result of H. C. Pocklington (cf. \cite[Theorem 4]{BrLeSe75} and \cite{Po14}) that will be used to show that $d$ is prime. For the readers' convenience we include a proof. This proof goes along the lines of the proof of \cite[Theorem 4 and Corollary 1]{BrLeSe75}.

\begin{proposition}\label{Proposition 2.1}
Let $a,b,c\in\mathbb{N}$ be such that $a^2>c>1$, $a\mid c-1$, $b^{c-1}\equiv 1\mod c$ and for each $p\in\mathbb{P}$ with $p\mid a$, ${\rm gcd}\left(b^{\frac{c-1}{p}}-1,c\right)=1$. Then $c\in\mathbb{P}$.
\end{proposition}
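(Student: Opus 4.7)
The plan is to follow the classical Pocklington strategy: first show that every prime divisor of $c$ is congruent to $1$ modulo $a$, and then use the size hypothesis $a^2>c$ to rule out compositeness.

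For the first step, I would fix an arbitrary prime $q$ with $q\mid c$ and analyze the multiplicative order $n$ of $b$ modulo $q$. From $b^{c-1}\equiv 1\pmod c$ it follows that $\gcd(b,c)=1$, hence $\gcd(b,q)=1$, so $n$ is well defined, and $n\mid c-1$. The key observation is that for each prime $p\mid a$, the hypothesis $\gcd(b^{(c-1)/p}-1,c)=1$ together with $q\mid c$ forces $b^{(c-1)/p}\not\equiv 1\pmod q$, so $n\nmid(c-1)/p$. Combining $n\mid c-1$ with $n\nmid (c-1)/p$ yields ${\rm v}_p(n)={\rm v}_p(c-1)$, and since $a\mid c-1$ gives ${\rm v}_p(a)\leq{\rm v}_p(c-1)$, we obtain $p^{{\rm v}_p(a)}\mid n$. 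Running over all primes $p\mid a$ shows $a\mid n$. Fermat's little theorem gives $n\mid q-1$, so $a\mid q-1$, i.e. $q\equiv 1\pmod a$.

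For the second step, suppose for contradiction that $c$ is composite. Then $c$ has a prime divisor $q$ with $q\leq\sqrt{c}$. On the other hand, the first step applied to $q$ yields $a\mid q-1$; since $q\geq 2$ this forces $q\geq a+1>a$. Combining, we get $a<q\leq\sqrt{c}$, hence $a^2<c$, contradicting the assumption $a^2>c$. Therefore $c$ must be prime.

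I do not anticipate any real obstacle: the only subtle point is the $p$-adic valuation argument used to conclude $a\mid n$, which hinges precisely on interpreting the hypothesis $\gcd(b^{(c-1)/p}-1,c)=1$ as a non-triviality condition on the order of $b$ modulo each prime divisor $q$ of $c$. The size hypothesis $a^2>c$ is what upgrades the congruence conclusion $q\equiv 1\pmod a$ into a genuine primality certificate, and this final step is immediate once the order-theoretic argument is in place.
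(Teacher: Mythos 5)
Your proposal is correct and follows essentially the same argument as the paper: the order of $b$ modulo a prime divisor $q$ of $c$ is shown to be divisible by $a$ via the $p$-adic valuation comparison ${\rm v}_p(n)={\rm v}_p(c-1)\geq{\rm v}_p(a)$, whence $q\equiv 1\pmod a$ and $q>a$, and the size hypothesis $a^2>c$ then excludes compositeness. The only cosmetic difference is in the final contradiction (you use the smallest prime factor $q\leq\sqrt{c}$, the paper multiplies two prime factors $r,s>a$ to get $c\geq rs>a^2$), which is immaterial.
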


\begin{proof}
\textsc{Claim}: For each $q\in\mathbb{P}$ with $q\mid c$, it follows that $q>a$.

\smallskip
Let $q\in\mathbb{P}$ be such that $q\mid c$. Then $q\nmid b$ (since $b^{c-1}\equiv 1\mod c$). Set $e=\min\{k\in\mathbb{N}\mid b^k\equiv 1\mod q\}$. We obtain that $e\mid q-1$. It remains to show that ${\rm v}_p\left(a\right)\leq {\rm v}_p\left(e\right)$ for each $p\in\mathbb{P}$ with $p\mid a$. (Then $a\mid e\mid q-1$, and hence $q=1+a\ell$ for some $\ell\in\mathbb{N}$, which implies that $q>a$.) Let $p\in\mathbb{P}$ be such that $p\mid a$. Since $b^{c-1}\equiv 1\mod c$, we have that $b^{c-1}\equiv 1\mod q$, and thus $e\mid c-1$. Moreover, since ${\rm gcd}\left(b^{\frac{c-1}{p}}-1,c\right)=1$, we infer that $q\nmid b^{\frac{c-1}{p}}-1$. Therefore, $b^{\frac{c-1}{p}}\not\equiv 1\mod q$, and hence $e\nmid\frac{c-1}{p}$. In particular, we obtain that $p\nmid\frac{c-1}{e}$, and thus ${\rm v}_p\left(c-1\right)={\rm v}_p\left(\frac{c-1}{e}\right)+{\rm v}_p\left(e\right)={\rm v}_p\left(e\right)$. Since $a\mid c-1$, we have that ${\rm v}_p\left(a\right)\leq {\rm v}_p\left(e\right)$.\qed(Claim)

\smallskip
Assume to the contrary that $c\not\in\mathbb{P}$. Then there are some $r,s\in\mathbb{P}$ such that $rs\mid c$. We infer by the claim that $c\geq rs>a^2>c$, a contradiction.
\end{proof}

Next we mention a simple (well-known) lemma that will be used to compute integer powers modulo $d$.

\begin{lemma}\label{Lemma 2.2}
Let $a,n\in\mathbb{N}$, let $m\in\mathbb{N}_0$ and let $\left(g_i\right)_{i=0}^m$ be the sequence of integers with $g_i\in\{0,1\}$ for each $i\in [0,m]$, $g_m=1$ and $n=\sum_{j=0}^m g_j2^j$ $($i.e., $\left(g_i\right)_{i=0}^m$ is the binary representation of $n)$. Furthermore, let $\left(h_i\right)_{i=0}^{m+1}$ be defined recursively by $h_0=1$ and $h_{r+1}=a^{g_{m-r}}h_r^2$ for each $r\in [0,m]$. Then $h_{m+1}=a^n$.
\end{lemma}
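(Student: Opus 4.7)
The plan is to establish the claim by a straightforward induction on $r$, tracking which initial segment of the binary expansion of $n$ has been accumulated into the exponent of $h_r$. Specifically, I would prove the following invariant for each $r\in[0,m+1]$:
\[
h_r=a^{N_r},\qquad\text{where }N_r=\sum_{j=0}^{r-1}g_{m-j}\,2^{r-1-j}.
\]
Here $N_r$ is precisely the integer whose binary representation (of length $r$) is obtained by reading the top $r$ bits $g_m,g_{m-1},\ldots,g_{m-r+1}$ of $n$ from most significant to least significant.

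The base case $r=0$ is immediate: $N_0=0$ (empty sum) and $h_0=1=a^0$. For the inductive step, assume $h_r=a^{N_r}$ and compute
\[
h_{r+1}=a^{g_{m-r}}h_r^2=a^{g_{m-r}}\cdot a^{2N_r}=a^{g_{m-r}+2N_r}.
\]
It remains to check that $g_{m-r}+2N_r=N_{r+1}$, which follows from the algebraic identity
\[
g_{m-r}+2\sum_{j=0}^{r-1}g_{m-j}\,2^{r-1-j}=g_{m-r}\cdot 2^{0}+\sum_{j=0}^{r-1}g_{m-j}\,2^{r-j}=\sum_{j=0}^{r}g_{m-j}\,2^{r-j}=N_{r+1}.
\]

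To finish, I would apply the invariant at $r=m+1$ to get $h_{m+1}=a^{N_{m+1}}$ with $N_{m+1}=\sum_{j=0}^{m}g_{m-j}\,2^{m-j}$; after the change of index $k=m-j$ this becomes $\sum_{k=0}^{m}g_k\,2^k=n$, so $h_{m+1}=a^n$ as required. The only mild obstacle is purely bookkeeping — being careful that the recursion consumes the binary digits in the correct order (most significant first, via the index $m-r$) and matching that convention with the stated expansion $n=\sum_{j=0}^{m}g_j2^j$ — but no conceptual difficulty arises, and the hypothesis $g_m=1$ is not needed for the identity itself (it merely says that the expansion has length exactly $m+1$).
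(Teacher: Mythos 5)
Your proof is correct and follows essentially the same route as the paper: the invariant $h_{r+1}=a^{\sum_{j=0}^{r}g_{m-j}2^{r-j}}$ you establish by induction is exactly the one the paper uses, followed by the same reindexing $k=m-j$ to identify the exponent with $n$. You have merely written out the inductive step that the paper declares ``straightforward.''
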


\begin{proof}
It is straightforward to show by induction that $h_{r+1}=a^{\sum_{j=0}^r g_{m-j}2^{r-j}}$ for each $r\in [0,m]$. We obtain that $h_{m+1}=a^{\sum_{j=0}^m g_{m-j}2^{m-j}}=a^{\sum_{j=0}^m g_j2^j}=a^n$.
\end{proof}

\section{Proof of Theorem~\ref{Theorem 1.3}}\label{3}

In what follows, we will use the sieve of Eratosthenes and well-known facts about $\mathcal{O}_K$ (e.g. that $\mathcal{O}_K$ is a Dedekind domain) without further mention. Besides that, we will indicate the binary representation of a positive integer by using the subscript ``${}_2$''.

\smallskip
Let $d=39028039587479$ and let $x,y\in\mathbb{N}_0$ be such that $\varepsilon=x+y\omega$. We need to show that $d\equiv 3\mod 4$, $d\in\mathbb{P}$ and $d\mid y$. Since $79=19\cdot 4+3$, we have that $d\equiv 79\equiv 3\mod 4$. Observe that $\omega=\sqrt{d}$, $x,y\in\mathbb{N}$ and $\varepsilon=x+y\sqrt{d}$.

\medskip
\textsc{Claim} 1: $d\in\mathbb{P}$.

\medskip
Set $p=3617$, $q=4021$ and $a=pq$. By Proposition~\ref{Proposition 2.1}, it is sufficient to show that $p,q\in\mathbb{P}$, $a^2>d$, $a\mid d-1$, $2^{d-1}\equiv 1\mod d$ and ${\rm gcd}\left(2^{\frac{d-1}{p}}-1,d\right)={\rm gcd}\left(2^{\frac{d-1}{q}}-1,d\right)=1$.

\medskip
Note that $\{t\in\mathbb{P}\mid t^2\leq p\}=\{2,3,5,7,11,13,17,19,23,29,31,37,41,43,47,53,59\}$ and

{\tiny
\begin{align*}
p=&1808\cdot 2+1=1205\cdot 3+2=723\cdot 5+2=516\cdot 7+5=328\cdot 11+9=278\cdot 13+3=212\cdot 17+13=190\cdot 19+7=157\cdot 23+6\\
=&124\cdot 29+21=116\cdot 31+21=97\cdot 37+28=88\cdot 41+9=84\cdot 43+5=76\cdot 47+45=68\cdot 53+13=61\cdot 59+18.
\end{align*}
}

This implies that $p\in\mathbb{P}$.

\medskip
Moreover, $\{t\in\mathbb{P}\mid t^2\leq q\}=\{2,3,5,7,11,13,17,19,23,29,31,37,41,43,47,53,59,61\}$ and

{\tiny
\begin{align*}
q=&2010\cdot 2+1=1340\cdot 3+1=804\cdot 5+1=574\cdot 7+3=365\cdot 11+6=309\cdot 13+4=236\cdot 17+9=211\cdot 19+12=174\cdot 23+19\\
=&138\cdot 29+19=129\cdot 31+22=108\cdot 37+25=98\cdot 41+3=93\cdot 43+22=85\cdot 47+26=75\cdot 53+46=68\cdot 59+9=65\cdot 61+56.
\end{align*}
}

Consequently, $q\in\mathbb{P}$. Observe that $a^2>\left(3000\cdot 4000\right)^2>10^{14}>d$ and $d=1+10790168534p=1+9706053118q=1+2683454a$. In particular, we obtain that $a\mid d-1$.

\medskip
Next, we show that $2^{d-1}\equiv 1\mod d$ by applying Lemma~\ref{Lemma 2.2}. Each item in the list of equations below consists of two parts. The first part is used to derive the binary representation of $d-1$, while the second part is the outcome of determining the representatives of the integers $h_i$ (in Lemma~\ref{Lemma 2.2}) modulo $d$. Also note that the second part contains an additional factor $2$ if and only if the remainder (modulo $2$) in the first part is $1$. (Keep in mind that the computation of the binary representation of $d-1$ is technically done from the bottom to the top.)

\medskip

{\tiny
$\bullet$ $2\cdot 0+1=1$,\hspace*{35.1mm} $2\cdot 1^2=2$.

$\bullet$ $2\cdot 1=2$,\hspace*{39.5mm} $2^2=4$.

$\bullet$ $2\cdot 2=4$,\hspace*{39.5mm} $4^2=16$.

$\bullet$ $2\cdot 4=8$,\hspace*{39.5mm} $16^2=256$.

$\bullet$ $2\cdot 8+1=17$,\hspace*{33.8mm} $2\cdot 256^2=131072$.

$\bullet$ $2\cdot 17+1=35$,\hspace*{32.5mm} $2\cdot 131072^2=34359738368$.

$\bullet$ $2\cdot 35=70$,\hspace*{36.9mm} $34359738368^2=30249831d+18934861837375$.

$\bullet$ $2\cdot 70+1=141$,\hspace*{31.2mm} $2\cdot 18934861837375^2=18372892750447d+8878320928137$.

$\bullet$ $2\cdot 141+1=283$,\hspace*{29.9mm} $2\cdot 8878320928137^2=4039382112766d+4872136924624$.

$\bullet$ $2\cdot 283+1=567$,\hspace*{29.9mm} $2\cdot 4872136924624^2=1216444303284d+37954390101716$.

$\bullet$ $2\cdot 567+1=1135$,\hspace*{28.6mm} $2\cdot 37954390101716^2=73820552772801d+14864000930633$.

$\bullet$ $2\cdot 1135+1=2271$,\hspace*{27.3mm} $2\cdot 14864000930633^2=11322040563715d+5306939836893$.

$\bullet$ $2\cdot 2271+1=4543$,\hspace*{27.3mm} $2\cdot 5306939836893^2=1443250069954d+7254835280932$.

$\bullet$ $2\cdot 4543=9086$,\hspace*{31.7mm} $7254835280932^2=1348585158511d+1741096904855$.

$\bullet$ $2\cdot 9086+1=18173$,\hspace*{26mm} $2\cdot 1741096904855^2=155345667583d+18644662148793$.

$\bullet$ $2\cdot 18173+1=36347$,\hspace*{24.7mm} $2\cdot 18644662148793^2=17814034746144d+34365896782722$.

$\bullet$ $2\cdot 36347+1=72695$,\hspace*{24.7mm} $2\cdot 34365896782722^2=60521352041448d+19084525628976$.

$\bullet$ $2\cdot 72695=145390$,\hspace*{27.8mm} $19084525628976^2=9332242211824d+32132914656880$.

$\bullet$ $2\cdot 145390+1=290781$,\hspace*{22.1mm} $2\cdot 32132914656880^2=52911917444994d+20520829038674$.

$\bullet$ $2\cdot 290781+1=581563$,\hspace*{22.1mm} $2\cdot 20520829038674^2=21579583749811d+29933591140083$.

$\bullet$ $2\cdot 581563=1163126$,\hspace*{25.2mm} $29933591140083^2=22958362449471d+9944907473280$.

$\bullet$ $2\cdot 1163126=2326252$,\hspace*{23.9mm} $9944907473280^2=2534105881245d+7176351027045$.

$\bullet$ $2\cdot 2326252=4652504$,\hspace*{23.9mm} $7176351027045^2=1319564462056d+17827533235201$.

$\bullet$ $2\cdot 4652504+1=9305009$,\hspace*{19.5mm} $2\cdot 17827533235201^2=16286800188352d+30177590176194$.

$\bullet$ $2\cdot 9305009+1=18610019$,\hspace*{18.2mm} $2\cdot 30177590176194^2=46668341964810d+29055194037282$.

$\bullet$ $2\cdot 18610019=37220038$,\hspace*{21.3mm} $29055194037282^2=21630712417718d+35791355394602$.

$\bullet$ $2\cdot 37220038+1=74440077$,\hspace*{16.9mm} $2\cdot 35791355394602^2=65646193584044d+32203984891732$.

$\bullet$ $2\cdot 74440077+1=148880155$,\hspace*{15.6mm} $2\cdot 32203984891732^2=53146233009337d+18209288628225$.

$\bullet$ $2\cdot 148880155+1=297760311$,\hspace*{14.3mm} $2\cdot 18209288628225^2=16991793379874d+22534432303604$.

$\bullet$ $2\cdot 297760311+1=595520623$,\hspace*{14.3mm} $2\cdot 22534432303604^2=26022349296203d+5977961735395$.

$\bullet$ $2\cdot 595520623+1=1191041247$,\hspace*{13mm} $2\cdot 5977961735395^2=1831300105645d+26065072393095$.

$\bullet$ $2\cdot 1191041247=2382082494$,\hspace*{16.1mm} $26065072393095^2=17407689600562d+2873437115827$.

$\bullet$ $2\cdot 2382082494=4764164988$,\hspace*{16.1mm} $2873437115827^2=211556638403d+25924124537892$.

$\bullet$ $2\cdot 4764164988+1=9528329977$,\hspace*{11.8mm} $2\cdot 25924124537892^2=34439866319688d+247631620776$.

$\bullet$ $2\cdot 9528329977=19056659954$,\hspace*{14.8mm} $247631620776^2=1571214446d+9241789320542$.

$\bullet$ $2\cdot 19056659954+1=38113319909$,\hspace*{9.1mm} $2\cdot 9241789320542^2=4376887527432d+9150470123600$.

$\bullet$ $2\cdot 38113319909+1=76226639819$,\hspace*{9.1mm} $2\cdot 9150470123600^2=4290817800121d+10455437635041$.

$\bullet$ $2\cdot 76226639819=152453279638$,\hspace*{12.2mm} $10455437635041^2=2800965082942d+24962411388463$.

$\bullet$ $2\cdot 152453279638+1=304906559277$,\hspace*{6.5mm} $2\cdot 24962411388463^2=31932015490051d+31490910333309$.

$\bullet$ $2\cdot 304906559277=609813118554$,\hspace*{10.9mm} $31490910333309^2=25409358094908d+8143824432549$.

$\bullet$ $2\cdot 609813118554=1219626237108$,\hspace*{9.6mm} $8143824432549^2=1699339169714d+30354920226395$.

$\bullet$ $2\cdot 1219626237108+1=2439252474217$,\hspace*{3.9mm} $2\cdot 30354920226395^2=47218419971389d+29633566753719$.

$\bullet$ $2\cdot 2439252474217=4878504948434$,\hspace*{8.3mm} $29633566753719^2=22500445521451d+15832914818932$.

$\bullet$ $2\cdot 4878504948434+1=9757009896869$,\hspace*{3.9mm} $2\cdot 15832914818932^2=12846209766784d+7728534743712$.

$\bullet$ $2\cdot 9757009896869+1=19514019793739$,\hspace*{2.6mm} $2\cdot 7728534743712^2=3060889038553d+1$.

$\bullet$ $2\cdot 19514019793739=39028039587478$,\hspace*{5.7mm} $1^2=1$.

}

\medskip
This shows that $d-1=39028039587478=1000110111111011101100011011111001011010010110_2$. By Lemma~\ref{Lemma 2.2}, we obtain that $2^{d-1}\equiv 1\mod d$. Now we show that $2^{\frac{d-1}{p}}\equiv 10285064380914\mod d$ by applying Lemma~\ref{Lemma 2.2} again. This is done along the lines of the proof of $2^{d-1}\equiv 1\mod d$ above.

\medskip

{\tiny
$\bullet$ $2\cdot 0+1=1$,\hspace*{35.1mm} $2\cdot 1^2=2$.

$\bullet$ $2\cdot 1=2$,\hspace*{39.5mm} $2^2=4$.

$\bullet$ $2\cdot 2+1=5$,\hspace*{35.1mm} $2\cdot 4^2=32$.

$\bullet$ $2\cdot 5=10$,\hspace*{38.2mm} $32^2=1024$.

$\bullet$ $2\cdot 10=20$,\hspace*{36.9mm} $1024^2=1048576$.

$\bullet$ $2\cdot 20=40$,\hspace*{36.9mm} $1048576^2=1099511627776$.

$\bullet$ $2\cdot 40=80$,\hspace*{36.9mm} $1099511627776^2=30975827440d+31390886082416$.

$\bullet$ $2\cdot 80=160$,\hspace*{35.6mm} $31390886082416^2=25248199485668d+897251646084$.

$\bullet$ $2\cdot 160+1=321$,\hspace*{29.9mm} $2\cdot 897251646084^2=41255493481d+23445371345713$.

$\bullet$ $2\cdot 321+1=643$,\hspace*{29.9mm} $2\cdot 23445371345713^2=28168744489781d+6603555904639$.

$\bullet$ $2\cdot 643=1286$,\hspace*{33mm} $6603555904639^2=1117323622877d+23588814563238$.

$\bullet$ $2\cdot 1286=2572$,\hspace*{31.7mm} $23588814563238^2=14257241162513d+36673782069917$.

$\bullet$ $2\cdot 2572+1=5145$,\hspace*{27.3mm} $2\cdot 36673782069917^2=68923077127515d+7453272389093$.

$\bullet$ $2\cdot 5145=10290$,\hspace*{30.4mm} $7453272389093^2=1423368170504d+28404889843233$.

$\bullet$ $2\cdot 10290=20580$,\hspace*{29.1mm} $28404889843233^2=20673284529132d+20389277954061$.

$\bullet$ $2\cdot 20580+1=41161$,\hspace*{24.7mm} $2\cdot 20389277954061^2=21303793881634d+11174538322756$.

$\bullet$ $2\cdot 41161=82322$,\hspace*{29.1mm} $11174538322756^2=3199502410231d+12534158337887$.

$\bullet$ $2\cdot 82322=164644$,\hspace*{27.8mm} $12534158337887^2=4025442397307d+8237359105716$.

$\bullet$ $2\cdot 164644+1=329289$,\hspace*{22.1mm} $2\cdot 8237359105716^2=3477196690058d+10027987161530$.

$\bullet$ $2\cdot 329289+1=658579$,\hspace*{22.1mm} $2\cdot 10027987161530^2=5153245080958d+10764345756918$.

$\bullet$ $2\cdot 658579+1=1317159$,\hspace*{20.8mm} $2\cdot 10764345756918^2=5937840629415d+7258376622663$.

$\bullet$ $2\cdot 1317159=2634318$,\hspace*{23.9mm} $7258376622663^2=1349902064087d+13688184444896$.

$\bullet$ $2\cdot 2634318=5268636$,\hspace*{23.9mm} $13688184444896^2=4800814885347d+11643098680603$.

$\bullet$ $2\cdot 5268636+1=10537273$,\hspace*{18.2mm} $2\cdot 11643098680603^2=6946889893478d+1481444325256$.

$\bullet$ $2\cdot 10537273+1=21074547$,\hspace*{16.9mm} $2\cdot 1481444325256^2=112466693794d+1299469525746$.

$\bullet$ $2\cdot 21074547+1=42149095$,\hspace*{16.9mm} $2\cdot 1299469525746^2=86533736574d+22516898956086$.

$\bullet$ $2\cdot 42149095+1=84298191$,\hspace*{16.9mm} $2\cdot 22516898956086^2=25981870673373d+12486243382125$.

$\bullet$ $2\cdot 84298191+1=168596383$,\hspace*{15.6mm} $2\cdot 12486243382125^2=7989449403329d+36029689713659$.

$\bullet$ $2\cdot 168596383=337192766$,\hspace*{18.9mm} $36029689713659^2=33261689661681d+18099097476082$.

$\bullet$ $2\cdot 337192766+1=674385533$,\hspace*{14.3mm} $2\cdot 18099097476082^2=16786768329189d+8643489516917$.

$\bullet$ $2\cdot 674385533=1348771066$,\hspace*{17.6mm} $8643489516917^2=1914262458958d+16355540998007$.

$\bullet$ $2\cdot 1348771066+1=2697542133$,\hspace*{11.8mm} $2\cdot 16355540998007^2=13708283796212d+16782573114550$.

$\bullet$ $2\cdot 2697542133+1=5395084267$,\hspace*{11.7mm} $2\cdot 16782573114550^2=14433456731225d+18430065073225$.

$\bullet$ $2\cdot 5395084267=10790168534$,\hspace*{15mm} $18430065073225^2=8703160655609d+10285064380914$.

}

\medskip
Observe that $\frac{d-1}{p}=10790168534=1010000011001001001110011111010110_2$. We infer by Lemma~\ref{Lemma 2.2} that $2^{\frac{d-1}{p}}-1\equiv 10285064380913\mod d$. Since $7336389398826\cdot 10285064380913=1933359658541d-1$, we have that ${\rm gcd}\left(2^{\frac{d-1}{p}}-1,d\right)=1$. Finally, we prove that $2^{\frac{d-1}{q}}\equiv 15901499388071\mod d$ by applying Lemma~\ref{Lemma 2.2}. We proceed along the lines of the proof of $2^{d-1}\equiv 1\mod d$ above.

\medskip

{\tiny
$\bullet$ $2\cdot 0+1=1$,\hspace*{35.1mm} $2\cdot 1^2=2$.

$\bullet$ $2\cdot 1=2$,\hspace*{39.5mm} $2^2=4$.

$\bullet$ $2\cdot 2=4$,\hspace*{39.5mm} $4^2=16$.

$\bullet$ $2\cdot 4+1=9$,\hspace*{35.1mm} $2\cdot 16^2=512$.

$\bullet$ $2\cdot 9=18$,\hspace*{38.2mm} $512^2=262144$.

$\bullet$ $2\cdot 18=36$,\hspace*{36.9mm} $262144^2=68719476736$.

$\bullet$ $2\cdot 36=72$,\hspace*{36.9mm} $68719476736^2=120999325d+36711407762021$.

$\bullet$ $2\cdot 72=144$,\hspace*{35.6mm} $36711407762021^2=34532286892056d+31998527837617$.

$\bullet$ $2\cdot 144+1=289$,\hspace*{29.9mm} $2\cdot 31998527837617^2=52470264691605d+38620486063583$.

$\bullet$ $2\cdot 289=578$,\hspace*{34.3mm} $38620486063583^2=38217188450990d+25764109643679$.

$\bullet$ $2\cdot 578+1=1157$,\hspace*{28.6mm} $2\cdot 25764109643679^2=34016022979769d+6589566597731$.

$\bullet$ $2\cdot 1157=2314$,\hspace*{31.7mm} $6589566597731^2=1112594647461d+6956899607542$.

$\bullet$ $2\cdot 2314=4628$,\hspace*{31.7mm} $6956899607542^2=1240094369611d+30110629581095$.

$\bullet$ $2\cdot 4628=9256$,\hspace*{31.7mm} $30110629581095^2=23230734194007d+5999347360672$.

$\bullet$ $2\cdot 9256=18512$,\hspace*{30.4mm} $5999347360672^2=922213084091d+7444474594995$.

$\bullet$ $2\cdot 18512+1=37025$,\hspace*{24.7mm} $2\cdot 7444474594995^2=2840019769443d+7552148495853$.

$\bullet$ $2\cdot 37025+1=74051$,\hspace*{24.7mm} $2\cdot 7552148495853^2=2922767707846d+10516484734984$.

$\bullet$ $2\cdot 74051=148102$,\hspace*{27.9mm} $10516484734984^2=2833769063220d+28051642057876$.

$\bullet$ $2\cdot 148102+1=296205$,\hspace*{22.1mm} $2\cdot 28051642057876^2=40324578454903d+38284942303215$.

$\bullet$ $2\cdot 296205=592410$,\hspace*{26.6mm} $38284942303215^2=37555993656179d+37268879953484$.

$\bullet$ $2\cdot 592410=1184820$,\hspace*{25.3mm} $37268879953484^2=35589013121550d+38577918665806$.

$\bullet$ $2\cdot 1184820+1=2369641$,\hspace*{19.5mm} $2\cdot 38577918665806^2=76265978220592d+10176084091704$.

$\bullet$ $2\cdot 2369641+1=4739283$,\hspace*{19.5mm} $2\cdot 10176084091704^2=5306578989668d+15426040080260$.

$\bullet$ $2\cdot 4739283+1=9478567$,\hspace*{19.5mm} $2\cdot 15426040080260^2=12194448661681d+9515608643001$.

$\bullet$ $2\cdot 9478567=18957134$,\hspace*{22.7mm} $9515608643001^2=2320044993389d+36985824109670$.

$\bullet$ $2\cdot 18957134+1=37914269$,\hspace*{16.9mm} $2\cdot 36985824109670^2=70100942785264d+9512534908344$.

$\bullet$ $2\cdot 37914269+1=75828539$,\hspace*{16.9mm} $2\cdot 9512534908344^2=4637092784516d+22708882969508$.

$\bullet$ $2\cdot 75828539+1=151657079$,\hspace*{15.6mm} $2\cdot 22708882969508^2=26426813704896d+7292632926944$.

$\bullet$ $2\cdot 151657079+1=303314159$,\hspace*{14.3mm} $2\cdot 7292632926944^2=2725348009752d+37465133263064$.

$\bullet$ $2\cdot 303314159+1=606628319$,\hspace*{14.3mm} $2\cdot 37465133263064^2=71929629325755d+32577343114547$.

$\bullet$ $2\cdot 606628319+1=1213256639$,\hspace*{13mm} $2\cdot 32577343114547^2=54385682479598d+31854972276976$.

$\bullet$ $2\cdot 1213256639+1=2426513279$,\hspace*{11.7mm} $2\cdot 31854972276976^2=52000524212466d+35754130095938$.

$\bullet$ $2\cdot 2426513279+1=4853026559$,\hspace*{11.7mm} $2\cdot 35754130095938^2=65509712116177d+10262965651905$.

$\bullet$ $2\cdot 4853026559=9706053118$,\hspace*{16.2mm} $10262965651905^2=2698789513526d+15901499388071$.

}

\medskip
It is now clear that $\frac{d-1}{q}=9706053118=1001000010100001101001110111111110_2$. Moreover, $2^{\frac{d-1}{q}}-1\equiv 15901499388070\mod d$ by Lemma~\ref{Lemma 2.2}. Since $11826010015564\cdot 15901499388070=4818363746001d+1$, it follows that ${\rm gcd}\left(2^{\frac{d-1}{q}}-1,d\right)=1$. This shows that $d\in\mathbb{P}$.\qed(Claim 1)

\medskip
Finally, we show that $d\mid y$. Set $r=57$ and $s=56$, and let $\left(a_i\right)_{i=1}^r,\left(b_i\right)_{i=1}^r,\left(c_i\right)_{i=1}^s,\left(d_i\right)_{i=1}^s,\left(e_i\right)_{i=1}^s,\left(f_i\right)_{i=1}^s$ be defined as follows.

{\tiny
\begin{table}[htbp]
\centering
\begin{tabular}{|*{15}{c|}}
\cline{1-7}\cline{9-15}
$i$ & $a_i$ & $b_i$ & $c_i$ & $d_i$ & $e_i$ & $f_i$ & \quad \quad & $i$ & $a_i$ & $b_i$ & $c_i$ & $d_i$ & $e_i$ & $f_i$\\
\cline{1-7}\cline{9-15}
$1$ & $-3033477$ & $3410579$ & $2$ & $84702667$ & $1$ & $19514019793739$ & \quad \quad & $30$ & $3843$ & $1909615$ & $271$ & $16934137$ & $49$ & $144014906218$\\
\cline{1-7}\cline{9-15}
$2$ & $-2271496$ & $3206358$ & $5$ & $146634276$ & $2$ & $7805607917495$ & \quad \quad & $31$ & $29419$ & $6660471$ & $277$ & $5275486$ & $9$ & $140895449774$\\
\cline{1-7}\cline{9-15}
$3$ & $-1787484$ & $82862$ & $23$ & $112959051$ & $7$ & $1696871286410$ & \quad \quad & $32$ & $37368$ & $3205421$ & $283$ & $6324658$ & $96$ & $137908267061$\\
\cline{1-7}\cline{9-15}
$4$ & $-1758518$ & $4049619$ & $29$ & $57435327$ & $14$ & $1345794468527$ & \quad \quad & $33$ & $38302$ & $2075306$ & $307$ & $8891039$ & $68$ & $127127164765$\\
\cline{1-7}\cline{9-15}
$5$ & $-1699348$ & $2129966$ & $37$ & $54701496$ & $5$ & $1054811880742$ & \quad \quad & $34$ & $80167$ & $4486315$ & $311$ & $8094883$ & $49$ & $125492088698$\\
\cline{1-7}\cline{9-15}
$6$ & $-1427442$ & $8014362$ & $47$ & $60589478$ & $3$ & $830383821010$ & \quad \quad & $35$ & $83134$ & $7103618$ & $337$ & $8014362$ & $90$ & $115810206467$\\
\cline{1-7}\cline{9-15}
$7$ & $-832653$ & $717212$ & $53$ & $37470722$ & $22$ & $736378105415$ & \quad \quad & $36$ & $102633$ & $6324658$ & $349$ & $21686733$ & $103$ & $111828193630$\\
\cline{1-7}\cline{9-15}
$8$ & $-748505$ & $9346906$ & $61$ & $9465381$ & $6$ & $639803927663$ & \quad \quad & $37$ & $115435$ & $9578599$ & $353$ & $17200418$ & $128$ & $110561018615$\\
\cline{1-7}\cline{9-15}
$9$ & $-674823$ & $15717595$ & $79$ & $40989357$ & $9$ & $494025817562$ & \quad \quad & $38$ & $116792$ & $11851993$ & $359$ & $6177547$ & $132$ & $108713202145$\\
\cline{1-7}\cline{9-15}
$10$ & $-554788$ & $3962534$ & $83$ & $26886549$ & $11$ & $470217344426$ & \quad \quad & $39$ & $156001$ & $4568510$ & $383$ & $35556425$ & $23$ & $101900886650$\\
\cline{1-7}\cline{9-15}
$11$ & $-521509$ & $17200418$ & $89$ & $16628268$ & $8$ & $438517298735$ & \quad \quad & $40$ & $197807$ & $3013477$ & $409$ & $11851993$ & $182$ & $95423079595$\\
\cline{1-7}\cline{9-15}
$12$ & $-482142$ & $13587629$ & $97$ & $33227623$ & $5$ & $402350923582$ & \quad \quad & $41$ & $211873$ & $4414221$ & $421$ & $4049619$ & $1$ & $92703181918$\\
\cline{1-7}\cline{9-15}
$13$ & $-445057$ & $8567013$ & $107$ & $27266087$ & $25$ & $364748033522$ & \quad \quad & $42$ & $219482$ & $1158859$ & $433$ & $1909615$ & $54$ & $90134040611$\\
\cline{1-7}\cline{9-15}
$14$ & $-425248$ & $2262519$ & $109$ & $16545406$ & $22$ & $358055409055$ & \quad \quad & $43$ & $229892$ & $6177547$ & $439$ & $3410579$ & $13$ & $88902140290$\\
\cline{1-7}\cline{9-15}
$15$ & $-322867$ & $3536609$ & $113$ & $31075882$ & $50$ & $345380881283$ & \quad \quad & $44$ & $264088$ & $6019577$ & $443$ & $13587629$ & $158$ & $88099412105$\\
\cline{1-7}\cline{9-15}
$16$ & $-280973$ & $3410579$ & $137$ & $14887445$ & $36$ & $284876201359$ & \quad \quad & $45$ & $269512$ & $147111$ & $449$ & $5020616$ & $202$ & $86922137075$\\
\cline{1-7}\cline{9-15}
$17$ & $-252892$ & $4558274$ & $139$ & $18716004$ & $49$ & $280777263202$ & \quad \quad & $46$ & $315998$ & $4315464$ & $467$ & $20082812$ & $8$ & $83571819245$\\
\cline{1-7}\cline{9-15}
$18$ & $-226498$ & $2007139$ & $157$ & $18190318$ & $57$ & $248586239390$ & \quad \quad & $47$ & $339243$ & $1212808$ & $557$ & $26469383$ & $149$ & $70068293654$\\
\cline{1-7}\cline{9-15}
$19$ & $-215202$ & $3644517$ & $163$ & $23253236$ & $3$ & $239435825690$ & \quad \quad & $48$ & $471496$ & $2548$ & $563$ & $15717595$ & $214$ & $69321562241$\\
\cline{1-7}\cline{9-15}
$20$ & $-207973$ & $4049619$ & $167$ & $4568510$ & $23$ & $233700835850$ & \quad \quad & $49$ & $499231$ & $1109106$ & $569$ & $5278544$ & $205$ & $68590579166$\\
\cline{1-7}\cline{9-15}
$21$ & $-206198$ & $5460335$ & $179$ & $16995512$ & $63$ & $218033740690$ & \quad \quad & $50$ & $596696$ & $1909615$ & $593$ & $6589566$ & $75$ & $65814569278$\\
\cline{1-7}\cline{9-15}
$22$ & $-193242$ & $2284255$ & $193$ & $16028724$ & $94$ & $202217821651$ & \quad \quad & $51$ & $1124049$ & $5020616$ & $601$ & $4414221$ & $280$ & $64938501679$\\
\cline{1-7}\cline{9-15}
$23$ & $-126128$ & $8014362$ & $199$ & $21271274$ & $15$ & $196120801946$ & \quad \quad & $52$ & $1154432$ & $1909615$ & $631$ & $13309440$ & $139$ & $61851092818$\\
\cline{1-7}\cline{9-15}
$24$ & $-121640$ & $5480460$ & $223$ & $1909615$ & $52$ & $175013630425$ & \quad \quad & $53$ & $1214853$ & $23621570$ & $641$ & $17283280$ & $265$ & $60886177094$\\
\cline{1-7}\cline{9-15}
$25$ & $-54422$ & $13915416$ & $227$ & $19110033$ & $90$ & $171929689777$ & \quad \quad & $54$ & $1403789$ & $16545406$ & $647$ & $13915416$ & $74$ & $60321544949$\\
\cline{1-7}\cline{9-15}
$26$ & $-48408$ & $8365791$ & $229$ & $9530079$ & $41$ & $170428120462$ & \quad \quad & $55$ & $1947578$ & $5128375$ & $683$ & $11066152$ & $192$ & $57142078405$\\
\cline{1-7}\cline{9-15}
$27$ & $-40763$ & $864323$ & $239$ & $4315464$ & $40$ & $163297236761$ & \quad \quad & $56$ & $1991126$ & $10145918$ & $691$ & $9578599$ & $38$ & $56480520385$\\
\cline{1-7}\cline{9-15}
$28$ & $-34641$ & $7529543$ & $263$ & $13737288$ & $63$ & $148395587770$ & \quad \quad & $57$ & $2126573$ & $6589566$\\
\cline{1-7}\cline{9-11}
$29$ & $-22980$ & $14887445$ & $269$ & $6324658$ & $125$ & $145085648966$\\
\cline{1-7}
\end{tabular}
\end{table}

}

For each $j\in [1,s]$, set $P_j=c_j\mathcal{O}_K+\left(e_j+\sqrt{d}\right)\mathcal{O}_K$ and $\overline{P}_j=c_j\mathcal{O}_K+\left(e_j-\sqrt{d}\right)\mathcal{O}_K$. Then $P_1=\overline{P}_1$, $c_1f_1+e_1^2=d$, $c_1\in\mathbb{P}$ and ${\rm gcd}\left(c_1,e_1\right)=1$. Observe that $c_jf_j+e_j^2=d$, $c_j\in\mathbb{P}$ and ${\rm gcd}\left(c_j,2e_j\right)=1$ for each $j\in [2,s]$. In particular, $P_j,\overline{P}_j\in\max\left(\mathcal{O}_K\right)$, $c_j\mathcal{O}_K=P_j\overline{P}_j$ and ${\rm N}\left(P_j\right)={\rm N}\left(\overline{P}_j\right)=c_j$ for each $j\in [1,s]$. Set $z=\prod_{i=1}^r\left(a_i+\sqrt{d}\right)^{b_i}$, set $n=\prod_{j=1}^s c_j^{d_j}$ and set $\eta=\frac{z}{n}$. Clearly, $\eta\in K$ and $\eta>0$.

\medskip
\textsc{Claim} 2: $\eta\in\mathcal{O}_K^{\times}$.

\medskip
Observe the following.

{\tiny
\begin{align*}
d=&1561121583499c_2^2+2^2=312224316694c_2^3+27^2=12488972039c_2^5+1402^2=73777012451c_3^2+30^2=46406705350c_4^2+623^2\\
=&17667740470c_6^2+943^2=13893926015c_7^2+1188^2=5665268302c_{10}^2+2501^2=3056458675c_{15}^2+10898^2=1399382135c_{20}^2+25908^2\\
=&1218065438c_{21}^2+2211^2=1047746695c_{22}^2+22868^2=320417518c_{36}^2+29419^2=125690710c_{47}^2+180617^2,
\end{align*}

$2=e_2$, $27=5c_2+e_2$, $1402=280c_2+e_2$, $30=c_3+e_3$, $623=21c_4+e_4$, $943=20c_6+e_6$, $1188=22c_7+e_7$, $2501=30c_{10}+e_{10}$, $10898=96c_{15}+e_{15}$, $25908=155c_{20}+e_{20}$, $2211=12c_{21}+e_{21}$, $22868=118c_{22}+e_{22}$, $29419=84c_{36}+e_{36}$, $180617=324c_{47}+e_{47}$.

}

\medskip
Let $j\in [2,s]$, $k,m\in\mathbb{N}_0$, $\ell\in\mathbb{N}$ be such that $d=kc_j^{\ell}+\left(mc_j+e_j\right)^2$. Set $I=c_j^{\ell}\mathcal{O}_K+\left(mc_j+e_j+\sqrt{d}\right)\mathcal{O}_K$ and $\overline{I}=c_j^{\ell}\mathcal{O}_K+\left(mc_j+e_j-\sqrt{d}\right)\mathcal{O}_K$. Clearly, $I$ and $\overline{I}$ are ideals of $\mathcal{O}_K$ such that $I\subseteq P_j$ and $\overline{I}\subseteq\overline{P}_j$. Observe that $I\overline{I}=c_j^{\ell}J$ for some ideal $J$ of $\mathcal{O}_K$ with $c_j^{\ell},2\left(mc_j+e_j\right)\in J$. Since ${\rm gcd}\left(c_j^{\ell},2\left(mc_j+e_j\right)\right)=1$, we obtain that $J=\mathcal{O}_K$, $I\nsubseteq\overline{P}_j$ and $\overline{I}\nsubseteq P_j$. Since $I\overline{I}=c_j^{\ell}\mathcal{O}_K=P_j^{\ell}\overline{P}_j^{\ell}$, we infer that $P_j^{\ell}=I$ and $\overline{P}_j^{\ell}=\overline{I}$. Using this, we get the following equations (and in analogy the equations for the conjugates $\overline{P}_j^{\ell}$).

\medskip
\textbf{Equations, part A}
\medskip

{\tiny
$P_2^2=c_2^2\mathcal{O}_K+\left(2+\sqrt{d}\right)\mathcal{O}_K$, $P_2^3=c_2^3\mathcal{O}_K+\left(27+\sqrt{d}\right)\mathcal{O}_K$, $P_2^5=c_2^5\mathcal{O}_K+\left(1402+\sqrt{d}\right)\mathcal{O}_K$, $P_3^2=c_3^2\mathcal{O}_K+\left(30+\sqrt{d}\right)\mathcal{O}_K$,

$P_4^2=c_4^2\mathcal{O}_K+\left(623+\sqrt{d}\right)\mathcal{O}_K$, $P_6^2=c_6^2\mathcal{O}_K+\left(943+\sqrt{d}\right)\mathcal{O}_K$, $P_7^2=c_7^2\mathcal{O}_K+\left(1188+\sqrt{d}\right)\mathcal{O}_K$, $P_{10}^2=c_{10}^2\mathcal{O}_K+\left(2501+\sqrt{d}\right)\mathcal{O}_K$,

$P_{15}^2=c_{15}^2\mathcal{O}_K+\left(10898+\sqrt{d}\right)\mathcal{O}_K$, $P_{20}^2=c_{20}^2\mathcal{O}_K+\left(25908+\sqrt{d}\right)\mathcal{O}_K$, $P_{21}^2=c_{21}^2\mathcal{O}_K+\left(2211+\sqrt{d}\right)\mathcal{O}_K$, $P_{22}^2=c_{22}^2\mathcal{O}_K+\left(22868+\sqrt{d}\right)\mathcal{O}_K$,

$P_{36}^2=c_{36}^2\mathcal{O}_K+\left(29419+\sqrt{d}\right)\mathcal{O}_K$, $P_{47}^2=c_{47}^2\mathcal{O}_K+\left(180617+\sqrt{d}\right)\mathcal{O}_K$.

}

\medskip
The following equations can easily be derived (by using elementary arithmetic).

\medskip
\textbf{Equations, part B}
\medskip

{\tiny
$a_1=-1516739c_1+e_1=-121339c_2^2-2=-131890c_3-e_3=-81986c_5+e_5=-64542c_6-e_6=-57235c_7-e_7=-6910c_{43}+e_{43}$

\hspace*{2.7mm} $=-4732c_{53}-e_{53}$,

$a_2=-4294c_3^2+30=-42858c_7-e_7=-28753c_9-e_9=-16580c_{16}-e_{16}=-9504c_{27}-e_{27}=-4864c_{46}-e_{46}$,

$a_3=-77717c_3+e_3=-20084c_{11}-e_{11}=-15818c_{15}-e_{15}=-4667c_{39}-e_{39}=-2833c_{52}+e_{52}=-2789c_{53}+e_{53}$,

$a_4=-351704c_2+e_2=-76457c_3-e_3=-33180c_7+e_7=-18129c_{12}-e_{12}=-4982c_{37}+e_{37}=-4300c_{40}+e_{40}=-4177c_{41}-e_{41}$,

$a_5=-13595c_2^3+27=-73885c_3+e_3=-17519c_{12}-e_{12}=-4437c_{39}+e_{39}=-3018c_{48}-e_{48}=-2828c_{51}+e_{51}$,

$a_6=-285488c_2-e_2=-62063c_3+e_3=-18069c_9+e_9=-13096c_{14}+e_{14}=-7173c_{23}-e_{23}=-4236c_{35}+e_{35}=-2563c_{47}+e_{47}$,

$a_7=-416327c_1+e_1=-166531c_2+e_2=-36202c_3-e_3=-22504c_5-e_5=-8584c_{12}-e_{12}=-3006c_{31}+e_{31}=-2319c_{38}-e_{38}$

\hspace*{2.7mm} $=-1783c_{46}+e_{46}$,

$a_8=-374253c_1+e_1=-1415c_3^2+30=-25811c_4+e_4=-20230c_5+e_5=-9018c_{10}-e_{10}=-1186c_{52}-e_{52}=-1157c_{54}+e_{54}$,

$a_9=-337412c_1+e_1=-26993c_2^2+2=-14358c_6+e_6=-4140c_{19}-e_{19}=-1762c_{39}+e_{39}=-1445c_{46}-e_{46}=-1199c_{48}+e_{48}$,

$a_{10}=-110958c_2+e_2=-5090c_{14}+e_{14}=-1572c_{37}+e_{37}=-1188c_{46}+e_{46}=-879c_{52}-e_{52}=-812c_{55}-e_{55}$,

$a_{11}=-260755c_1+e_1=-22674c_3-e_3=-11096c_6+e_6=-2297c_{25}-e_{25}=-1494c_{36}-e_{36}=-1477c_{37}-e_{37}=-814c_{53}+e_{53}$,

$a_{12}=-96428c_2-e_2=-20963c_3+e_3=-13031c_5+e_5=-3469c_{17}+e_{17}=-1833c_{28}-e_{28}=-1088c_{44}-e_{44}=-856c_{48}-e_{48}$,

$a_{13}=-222529c_1+e_1=-89011c_2-e_2=-19350c_3-e_3=-5362c_{10}-e_{10}=-3939c_{15}+e_{15}=-2486c_{21}-e_{21}=-1961c_{25}+e_{25}$

\hspace*{3.9mm} $=-1005c_{44}+e_{44}$,

$a_{14}=-17010c_2^2+2=-5383c_9+e_9=-2137c_{23}+e_{23}=-1569c_{30}-e_{30}=-747c_{49}-e_{49}=-663c_{53}-e_{53}$,

$a_{15}=-161434c_1+e_1=-64573c_2-e_2=-14038c_3+e_3=-8726c_5-e_5=-5293c_8+e_8=-1038c_{34}-e_{34}=-915c_{37}+e_{37}$

\hspace*{3.9mm} $=-473c_{55}+e_{55}$,

$a_{16}=-140487c_1+e_1=-2248c_2^3+27=-7594c_5+e_5=-1790c_{18}+e_{18}=-1412c_{23}+e_{23}=-915c_{33}-e_{33}=-640c_{43}-e_{43}$,

$a_{17}=-50578c_2-e_2=-478c_3^2-30=-933c_{30}-e_{30}=-913c_{31}+e_{31}=-813c_{34}-e_{34}=-401c_{52}+e_{52}$,

$a_{18}=-9060c_2^2+2=-7c_{21}^2-2211=-738c_{33}+e_{33}=-642c_{37}+e_{37}=-504c_{45}-e_{45}$,

$a_{19}=-8608c_2^2-2=-4060c_7-e_7=-3528c_8+e_8=-2011c_{13}-e_{13}=-16c_{15}^2-10898=-610c_{37}+e_{37}$,

$a_{20}=-103987c_1+e_1=-67c_2^5+1402=-9042c_3-e_3=-7171c_4-e_4=-2144c_{12}-e_{12}=-908c_{26}-e_{26}=-494c_{41}+e_{41}$,

$a_{21}=-8248c_2^2+2=-1313c_{18}-e_{18}=-1265c_{19}-e_{19}=-767c_{29}+e_{29}=-574c_{38}-e_{38}=-327c_{52}+e_{52}$,

$a_{22}=-38648c_2-e_2=-6664c_4+e_4=-3168c_8+e_8=-6c_{20}^2-25908=-735c_{28}+e_{28}=-322c_{51}+e_{51}$,

$a_{23}=-25226c_2+e_2=-3409c_5+e_5=-1179c_{13}+e_{13}=-803c_{18}-e_{18}=-4c_{22}^2+22868=-374c_{35}-e_{35}$,

$a_{24}=-230c_3^2+30=-4194c_4-e_4=-1994c_8-e_8=-531c_{26}-e_{26}=-396c_{33}-e_{33}=-205c_{50}-e_{50}$,

$a_{25}=-10884c_2-e_2=-1471c_5+e_5=-689c_9+e_9=-561c_{12}-e_{12}=-347c_{18}+e_{18}=-201c_{30}+e_{30}=-84c_{54}-e_{54}$,

$a_{26}=-9682c_2+e_2=-2105c_3+e_3=-544c_{11}+e_{11}=-499c_{12}-e_{12}=-297c_{19}+e_{19}=-139c_{36}+e_{36}=-70c_{56}-e_{56}$,

$a_{27}=-20382c_1+e_1=-8153c_2+e_2=-77c_3^2-30=-260c_{18}+e_{18}=-152c_{29}+e_{29}=-133c_{33}+e_{33}=-72c_{49}+e_{49}$,

$a_{28}=-17321c_1+e_1=-1195c_4+e_4=-307c_{15}+e_{15}=-179c_{22}-e_{22}=-174c_{23}-e_{23}=-153c_{25}+e_{25}=-51c_{55}+e_{55}$,

$a_{29}=-489c_6+e_6=-434c_7+e_7=-291c_9+e_9=-277c_{10}+e_{10}=-215c_{13}+e_{13}=-168c_{16}+e_{16}=-141c_{19}+e_{19}$,

$a_{30}=1921c_1+e_1=769c_2-e_2=133c_4-e_4=104c_5-e_5=28c_{17}-e_{17}=17c_{24}+e_{24}=14c_{30}+e_{30}=9c_{42}-e_{42}$,

$a_{31}=14709c_1+e_1=626c_6-e_6=215c_{16}-e_{16}=212c_{17}-e_{17}=164c_{21}+e_{21}=29419$,

$a_{32}=7474c_2-e_2=1625c_3-e_3=795c_6+e_6=349c_{13}+e_{13}=163c_{26}+e_{26}=80c_{46}+e_{46}=59c_{52}+e_{52}$,

$a_{33}=1532c_2^2+2=1665c_3+e_3=815c_6-e_6=628c_8-e_8=235c_{19}-e_{19}=123c_{34}+e_{34}=82c_{46}+e_{46}$,

$a_{34}=40083c_1+e_1=16033c_2+e_2=1513c_7-e_7=12c_{10}^2-2501=709c_{15}+e_{15}=296c_{30}-e_{30}=230c_{36}-e_{36}$,

$a_{35}=2247c_5-e_5=934c_{11}+e_{11}=857c_{12}+e_{12}=217c_{39}+e_{39}=178c_{46}+e_{46}=122c_{55}-e_{55}$,

$a_{36}=51316c_1+e_1=20527c_2-e_2=4462c_3+e_3=2774c_5-e_5=448c_{26}+e_{26}=390c_{28}+e_{28}=382c_{29}-e_{29}=363c_{32}-e_{32}$,

$a_{37}=57717c_1+e_1=138c_4^2-623=3120c_5-e_5=2456c_6+e_6=1190c_{12}+e_{12}=580c_{23}+e_{23}=167c_{56}+e_{56}$,

$a_{38}=23358c_2+e_2=2485c_6-e_6=1407c_{10}+e_{10}=10c_{15}^2-10898=305c_{39}-e_{39}=286c_{40}-e_{40}$,

$a_{39}=78000c_1+e_1=2943c_7+e_7=1431c_{14}+e_{14}=994c_{18}-e_{18}=934c_{20}+e_{20}=784c_{23}-e_{23}=241c_{54}+e_{54}$,

$a_{40}=98903c_1+e_1=39561c_2+e_2=8600c_3+e_3=5346c_5+e_5=2504c_9-e_9=871c_{25}+e_{25}=441c_{45}-e_{45}=348c_{49}-e_{49}$,

$a_{41}=105936c_1+e_1=8475c_2^2-2=4508c_6-e_6=1184c_{21}-e_{21}=782c_{30}-e_{30}=372c_{49}+e_{49}=353c_{51}-e_{51}$,

$a_{42}=43896c_2+e_2=9543c_3-e_3=2466c_{11}+e_{11}=2051c_{13}+e_{13}=1103c_{23}-e_{23}=573c_{39}+e_{39}=470c_{46}-e_{46}$,

$a_{43}=45978c_2+e_2=9995c_3+e_3=4338c_7-e_7=2034c_{15}+e_{15}=812c_{32}+e_{32}=640c_{38}+e_{38}=413c_{47}-e_{47}$,

$a_{44}=52818c_2-e_2=9106c_4+e_4=3343c_9-e_9=1475c_{21}+e_{21}=1327c_{23}+e_{23}=860c_{33}+e_{33}=849c_{34}+e_{34}$,

$a_{45}=53902c_2+e_2=9294c_4-e_4=3247c_{10}+e_{10}=1717c_{18}-e_{18}=1025c_{28}-e_{28}=973c_{31}-e_{31}=952c_{32}+e_{32}$,

$a_{46}=12640c_2^2-2=375c_4^2+623=2796c_{15}+e_{15}=1765c_{21}+e_{21}=1322c_{27}+e_{27}=825c_{39}+e_{39}$,

$a_{47}=169621c_1+e_1=67849c_2-e_2=14750c_3-e_3=154c_6^2-943=1252c_{30}-e_{30}=829c_{40}+e_{40}=491c_{56}-e_{56}$,

$a_{48}=16258c_4+e_4=12743c_5+e_5=1793c_{28}-e_{28}=1231c_{39}+e_{39}=829c_{49}-e_{49}=747c_{52}+e_{52}$,

$a_{49}=249615c_1+e_1=21706c_3-e_3=10622c_6-e_6=2089c_{27}-e_{27}=1842c_{30}+e_{30}=1069c_{46}+e_{46}=842c_{50}-e_{50}$,

$a_{50}=25943c_3+e_3=212c_7^2+1188=9782c_8-e_8=7553c_9+e_9=2676c_{24}-e_{24}=1071c_{47}+e_{47}$,

$a_{51}=562024c_1+e_1=48872c_3-e_3=23916c_6-e_6=8205c_{16}-e_{16}=2537c_{44}+e_{44}=2503c_{45}+e_{45}=1754c_{53}-e_{53}$,

$a_{52}=230886c_2+e_2=50193c_3-e_3=31201c_5-e_5=5982c_{22}-e_{22}=5086c_{25}-e_{25}=2666c_{42}+e_{42}=2472c_{46}+e_{46}$,

$a_{53}=607426c_1+e_1=242971c_2-e_2=52820c_3-e_3=41891c_4+e_4=32834c_5-e_5=25848c_6-e_6=15378c_9-e_9=11354c_{13}-e_{13}$

\hspace*{3.9mm} $=3172c_{39}-e_{39}$,

$a_{54}=701894c_1+e_1=61034c_3+e_3=48407c_4-e_4=26487c_7-e_7=15773c_{11}-e_{11}=14472c_{12}+e_{12}=12879c_{14}-e_{14}=2520c_{47}+e_{47}$,

$a_{55}=389516c_2-e_2=84677c_3+e_3=24653c_9-e_9=14011c_{17}+e_{17}=7405c_{28}+e_{28}=7031c_{31}-e_{31}=5085c_{39}+e_{39}$,

$a_{56}=3764c_3^2-30=68660c_4-e_4=37568c_7+e_7=14325c_{17}-e_{17}=7c_{47}^2-180617$,

$a_{57}=1063286c_1+e_1=85063c_2^2-2=92460c_3-e_3=11019c_{22}-e_{22}=5199c_{40}+e_{40}=3586c_{50}+e_{50}=3318c_{53}-e_{53}$.

}

\medskip
It is straightforward to prove that the equations below are satisfied. Note that these equations will be used later to show that the ideal norms of certain nonzero ideals of $\mathcal{O}_K$ coincide.

\medskip
\textbf{Equations, part C}
\medskip

{\tiny
\begin{align*}
d=&a_1^2+c_1c_2^2c_3c_5c_6c_7c_{43}c_{53}=a_2^2+c_3^2c_7c_9c_{16}c_{27}c_{46}=a_3^2+c_3c_{11}c_{15}c_{39}c_{52}c_{53}=a_4^2+c_2c_3c_7c_{12}c_{37}c_{40}c_{41}\\
=&a_5^2+c_2^3c_3c_{12}c_{39}c_{48}c_{51}=a_6^2+c_2c_3c_9c_{14}c_{23}c_{35}c_{47}=a_7^2+c_1c_2c_3c_5c_{12}c_{31}c_{38}c_{46}=a_8^2+c_1c_3^2c_4c_5c_{10}c_{52}c_{54}\\
=&a_9^2+c_1c_2^2c_6c_{19}c_{39}c_{46}c_{48}=a_{10}^2+c_2c_{14}c_{37}c_{46}c_{52}c_{55}=a_{11}^2+c_1c_3c_6c_{25}c_{36}c_{37}c_{53}=a_{12}^2+c_2c_3c_5c_{17}c_{28}c_{44}c_{48}\\
=&a_{13}^2+c_1c_2c_3c_{10}c_{15}c_{21}c_{25}c_{44}=a_{14}^2+c_2^2c_9c_{23}c_{30}c_{49}c_{53}=a_{15}^2+c_1c_2c_3c_5c_8c_{34}c_{37}c_{55}=a_{16}^2+c_1c_2^3c_5c_{18}c_{23}c_{33}c_{43}\\
=&a_{17}^2+c_2c_3^2c_{30}c_{31}c_{34}c_{52}=a_{18}^2+c_2^2c_{21}^2c_{33}c_{37}c_{45}=a_{19}^2+c_2^2c_7c_8c_{13}c_{15}^2c_{37}=a_{20}^2+c_1c_2^5c_3c_4c_{12}c_{26}c_{41}\\
=&a_{21}^2+c_2^2c_{18}c_{19}c_{29}c_{38}c_{52}=a_{22}^2+c_2c_4c_8c_{20}^2c_{28}c_{51}=a_{23}^2+c_2c_5c_{13}c_{18}c_{22}^2c_{35}=a_{24}^2+c_3^2c_4c_8c_{26}c_{33}c_{50}\\
=&a_{25}^2+c_2c_5c_9c_{12}c_{18}c_{30}c_{54}=a_{26}^2+c_2c_3c_{11}c_{12}c_{19}c_{36}c_{56}=a_{27}^2+c_1c_2c_3^2c_{18}c_{29}c_{33}c_{49}=a_{28}^2+c_1c_4c_{15}c_{22}c_{23}c_{25}c_{55}\\
=&a_{29}^2+c_6c_7c_9c_{10}c_{13}c_{16}c_{19}=a_{30}^2+c_1c_2c_4c_5c_{17}c_{24}c_{30}c_{42}=a_{31}^2+c_1c_6c_{16}c_{17}c_{21}c_{36}^2=a_{32}^2+c_2c_3c_6c_{13}c_{26}c_{46}c_{52}\\
=&a_{33}^2+c_2^2c_3c_6c_8c_{19}c_{34}c_{46}=a_{34}^2+c_1c_2c_7c_{10}^2c_{15}c_{30}c_{36}=a_{35}^2+c_5c_{11}c_{12}c_{39}c_{46}c_{55}=a_{36}^2+c_1c_2c_3c_5c_{26}c_{28}c_{29}c_{32}\\
=&a_{37}^2+c_1c_4^2c_5c_6c_{12}c_{23}c_{56}=a_{38}^2+c_2c_6c_{10}c_{15}^2c_{39}c_{40}=a_{39}^2+c_1c_7c_{14}c_{18}c_{20}c_{23}c_{54}=a_{40}^2+c_1c_2c_3c_5c_9c_{25}c_{45}c_{49}\\
=&a_{41}^2+c_1c_2^2c_6c_{21}c_{30}c_{49}c_{51}=a_{42}^2+c_2c_3c_{11}c_{13}c_{23}c_{39}c_{46}=a_{43}^2+c_2c_3c_7c_{15}c_{32}c_{38}c_{47}=a_{44}^2+c_2c_4c_9c_{21}c_{23}c_{33}c_{34}\\
=&a_{45}^2+c_2c_4c_{10}c_{18}c_{28}c_{31}c_{32}=a_{46}^2+c_2^2c_4^2c_{15}c_{21}c_{27}c_{39}=a_{47}^2+c_1c_2c_3c_6^2c_{30}c_{40}c_{56}=a_{48}^2+c_4c_5c_{28}c_{39}c_{49}c_{52}\\
=&a_{49}^2+c_1c_3c_6c_{27}c_{30}c_{46}c_{50}=a_{50}^2+c_3c_7^2c_8c_9c_{24}c_{47}=a_{51}^2+c_1c_3c_6c_{16}c_{44}c_{45}c_{53}=a_{52}^2+c_2c_3c_5c_{22}c_{25}c_{42}c_{46}\\
=&a_{53}^2+c_1c_2c_3c_4c_5c_6c_9c_{13}c_{39}=a_{54}^2+c_1c_3c_4c_7c_{11}c_{12}c_{14}c_{47}=a_{55}^2+c_2c_3c_9c_{17}c_{28}c_{31}c_{39}=a_{56}^2+c_3^2c_4c_7c_{17}c_{47}^2\\
=&a_{57}^2+c_1c_2^2c_3c_{22}c_{40}c_{50}c_{53}.
\end{align*}
}

\medskip
Observe that $2$ and $d$ are precisely the ramified primes of $\mathcal{O}_K$ (since $d\in\mathbb{P}$ and $4d$ is the discriminant of $\mathcal{O}_K$). Therefore, $c_j$ splits in $\mathcal{O}_K$ for each $j\in [2,s]$. We infer that all positive powers of the maximal ideals $P_1$, $P_j$ and $\overline{P}_j$ are pairwise comaximal for each $j\in [2,s]$. Next, we discuss an elementary fact that will subsequently be used. Let $I$ and $J$ be nonzero ideals of $\mathcal{O}_K$ such that $I\subseteq J$ and ${\rm N}\left(I\right)={\rm N}\left(J\right)$. It follows by the theorem of Lagrange that $|\mathcal{O}_K/J|={\rm N}\left(J\right)={\rm N}\left(I\right)=|\mathcal{O}_K/I|=|\mathcal{O}_K/J||J/I|$, and thus $|J/I|=1$. This implies that $I=J$.

\medskip
By combining the equations in the parts A, B and C above, we obtain the next list of equations. Note that the equations in parts A and B together show that each left hand side is contained in each right hand side. The equations in part C prove that the ideal norms on each side coincide, and hence both sides coincide. We will make these statements clearer, by giving a detailed proof for the first equality below. (The remaining equations can be proved in analogy.)

\medskip
\textbf{Equations, part D}
\medskip

{\tiny
$\left(a_1+\sqrt{d}\right)\mathcal{O}_K=P_1\overline{P}_2^2\overline{P}_3P_5\overline{P}_6\overline{P}_7P_{43}\overline{P}_{53}$, $\left(a_2+\sqrt{d}\right)\mathcal{O}_K=P_3^2\overline{P}_7\overline{P}_9\overline{P}_{16}\overline{P}_{27}\overline{P}_{46}$,
$\left(a_3+\sqrt{d}\right)\mathcal{O}_K=P_3\overline{P}_{11}\overline{P}_{15}\overline{P}_{39}P_{52}P_{53}$,

$\left(a_4+\sqrt{d}\right)\mathcal{O}_K=P_2\overline{P}_3P_7\overline{P}_{12}P_{37}P_{40}\overline{P}_{41}$,
$\left(a_5+\sqrt{d}\right)\mathcal{O}_K=P_2^3P_3\overline{P}_{12}P_{39}\overline{P}_{48}P_{51}$, $\left(a_6+\sqrt{d}\right)\mathcal{O}_K=\overline{P}_2P_3P_9P_{14}\overline{P}_{23}P_{35}P_{47}$,

$\left(a_7+\sqrt{d}\right)\mathcal{O}_K=P_1P_2\overline{P}_3\overline{P}_5\overline{P}_{12}P_{31}\overline{P}_{38}P_{46}$, $\left(a_8+\sqrt{d}\right)\mathcal{O}_K=P_1P_3^2P_4P_5\overline{P}_{10}\overline{P}_{52}P_{54}$,
$\left(a_9+\sqrt{d}\right)\mathcal{O}_K=P_1P_2^2P_6\overline{P}_{19}P_{39}\overline{P}_{46}P_{48}$,

$\left(a_{10}+\sqrt{d}\right)\mathcal{O}_K=P_2P_{14}P_{37}P_{46}\overline{P}_{52}\overline{P}_{55}$,
$\left(a_{11}+\sqrt{d}\right)\mathcal{O}_K=P_1\overline{P}_3P_6\overline{P}_{25}\overline{P}_{36}\overline{P}_{37}P_{53}$, $\left(a_{12}+\sqrt{d}\right)\mathcal{O}_K=\overline{P}_2P_3P_5P_{17}\overline{P}_{28}\overline{P}_{44}\overline{P}_{48}$,

$\left(a_{13}+\sqrt{d}\right)\mathcal{O}_K=P_1\overline{P}_2\overline{P}_3\overline{P}_{10}P_{15}\overline{P}_{21}P_{25}P_{44}$, $\left(a_{14}+\sqrt{d}\right)\mathcal{O}_K=P_2^2P_9P_{23}\overline{P}_{30}\overline{P}_{49}\overline{P}_{53}$,
$\left(a_{15}+\sqrt{d}\right)\mathcal{O}_K=P_1\overline{P}_2P_3\overline{P}_5P_8\overline{P}_{34}P_{37}P_{55}$,

$\left(a_{16}+\sqrt{d}\right)\mathcal{O}_K=P_1P_2^3P_5P_{18}P_{23}\overline{P}_{33}\overline{P}_{43}$,
$\left(a_{17}+\sqrt{d}\right)\mathcal{O}_K=\overline{P}_2\overline{P}_3^2\overline{P}_{30}P_{31}\overline{P}_{34}P_{52}$, $\left(a_{18}+\sqrt{d}\right)\mathcal{O}_K=P_2^2\overline{P}_{21}^2P_{33}P_{37}\overline{P}_{45}$,

$\left(a_{19}+\sqrt{d}\right)\mathcal{O}_K=\overline{P}_2^2\overline{P}_7P_8\overline{P}_{13}\overline{P}_{15}^2P_{37}$, $\left(a_{20}+\sqrt{d}\right)\mathcal{O}_K=P_1P_2^5\overline{P}_3\overline{P}_4\overline{P}_{12}\overline{P}_{26}P_{41}$,
$\left(a_{21}+\sqrt{d}\right)\mathcal{O}_K=P_2^2\overline{P}_{18}\overline{P}_{19}P_{29}\overline{P}_{38}P_{52}$,

$\left(a_{22}+\sqrt{d}\right)\mathcal{O}_K=\overline{P}_2P_4P_8\overline{P}_{20}^2P_{28}P_{51}$,
$\left(a_{23}+\sqrt{d}\right)\mathcal{O}_K=P_2P_5P_{13}\overline{P}_{18}P_{22}^2\overline{P}_{35}$, $\left(a_{24}+\sqrt{d}\right)\mathcal{O}_K=P_3^2\overline{P}_4\overline{P}_8\overline{P}_{26}\overline{P}_{33}\overline{P}_{50}$,

$\left(a_{25}+\sqrt{d}\right)\mathcal{O}_K=\overline{P}_2P_5P_9\overline{P}_{12}P_{18}P_{30}\overline{P}_{54}$, $\left(a_{26}+\sqrt{d}\right)\mathcal{O}_K=P_2P_3P_{11}\overline{P}_{12}P_{19}P_{36}\overline{P}_{56}$,
$\left(a_{27}+\sqrt{d}\right)\mathcal{O}_K=P_1P_2\overline{P}_3^2P_{18}P_{29}P_{33}P_{49}$,

$\left(a_{28}+\sqrt{d}\right)\mathcal{O}_K=P_1P_4P_{15}\overline{P}_{22}\overline{P}_{23}P_{25}P_{55}$,
$\left(a_{29}+\sqrt{d}\right)\mathcal{O}_K=P_6P_7P_9P_{10}P_{13}P_{16}P_{19}$, $\left(a_{30}+\sqrt{d}\right)\mathcal{O}_K=P_1\overline{P}_2\overline{P}_4\overline{P}_5\overline{P}_{17}P_{24}P_{30}\overline{P}_{42}$,

$\left(a_{31}+\sqrt{d}\right)\mathcal{O}_K=P_1\overline{P}_6\overline{P}_{16}\overline{P}_{17}P_{21}P_{36}^2$, $\left(a_{32}+\sqrt{d}\right)\mathcal{O}_K=\overline{P}_2\overline{P}_3P_6P_{13}P_{26}P_{46}P_{52}$,
$\left(a_{33}+\sqrt{d}\right)\mathcal{O}_K=P_2^2P_3\overline{P}_6\overline{P}_8\overline{P}_{19}P_{34}P_{46}$,

$\left(a_{34}+\sqrt{d}\right)\mathcal{O}_K=P_1P_2\overline{P}_7\overline{P}_{10}^2P_{15}\overline{P}_{30}\overline{P}_{36}$,
$\left(a_{35}+\sqrt{d}\right)\mathcal{O}_K=\overline{P}_5P_{11}P_{12}P_{39}P_{46}\overline{P}_{55}$, $\left(a_{36}+\sqrt{d}\right)\mathcal{O}_K=P_1\overline{P}_2P_3\overline{P}_5P_{26}P_{28}\overline{P}_{29}\overline{P}_{32}$,

$\left(a_{37}+\sqrt{d}\right)\mathcal{O}_K=P_1\overline{P}_4^2\overline{P}_5P_6P_{12}P_{23}P_{56}$, $\left(a_{38}+\sqrt{d}\right)\mathcal{O}_K=P_2\overline{P}_6P_{10}\overline{P}_{15}^2\overline{P}_{39}\overline{P}_{40}$,
$\left(a_{39}+\sqrt{d}\right)\mathcal{O}_K=P_1P_7P_{14}\overline{P}_{18}P_{20}\overline{P}_{23}P_{54}$,

$\left(a_{40}+\sqrt{d}\right)\mathcal{O}_K=P_1P_2P_3P_5\overline{P}_9P_{25}\overline{P}_{45}\overline{P}_{49}$,
$\left(a_{41}+\sqrt{d}\right)\mathcal{O}_K=P_1\overline{P}_2^2\overline{P}_6\overline{P}_{21}\overline{P}_{30}P_{49}\overline{P}_{51}$, $\left(a_{42}+\sqrt{d}\right)\mathcal{O}_K=P_2\overline{P}_3P_{11}P_{13}\overline{P}_{23}P_{39}\overline{P}_{46}$,

$\left(a_{43}+\sqrt{d}\right)\mathcal{O}_K=P_2P_3\overline{P}_7P_{15}P_{32}P_{38}\overline{P}_{47}$, $\left(a_{44}+\sqrt{d}\right)\mathcal{O}_K=\overline{P}_2P_4\overline{P}_9P_{21}P_{23}P_{33}P_{34}$,
$\left(a_{45}+\sqrt{d}\right)\mathcal{O}_K=P_2\overline{P}_4P_{10}\overline{P}_{18}\overline{P}_{28}\overline{P}_{31}P_{32}$,

$\left(a_{46}+\sqrt{d}\right)\mathcal{O}_K=\overline{P}_2^2P_4^2P_{15}P_{21}P_{27}P_{39}$,
$\left(a_{47}+\sqrt{d}\right)\mathcal{O}_K=P_1\overline{P}_2\overline{P}_3\overline{P}_6^2\overline{P}_{30}P_{40}\overline{P}_{56}$, $\left(a_{48}+\sqrt{d}\right)\mathcal{O}_K=P_4P_5\overline{P}_{28}P_{39}\overline{P}_{49}P_{52}$,

$\left(a_{49}+\sqrt{d}\right)\mathcal{O}_K=P_1\overline{P}_3\overline{P}_6\overline{P}_{27}P_{30}P_{46}\overline{P}_{50}$, $\left(a_{50}+\sqrt{d}\right)\mathcal{O}_K=P_3P_7^2\overline{P}_8P_9\overline{P}_{24}P_{47}$,
$\left(a_{51}+\sqrt{d}\right)\mathcal{O}_K=P_1\overline{P}_3\overline{P}_6\overline{P}_{16}P_{44}P_{45}\overline{P}_{53}$,

$\left(a_{52}+\sqrt{d}\right)\mathcal{O}_K=P_2\overline{P}_3\overline{P}_5\overline{P}_{22}\overline{P}_{25}P_{42}P_{46}$,
$\left(a_{53}+\sqrt{d}\right)\mathcal{O}_K=P_1\overline{P}_2\overline{P}_3P_4\overline{P}_5\overline{P}_6\overline{P}_9\overline{P}_{13}\overline{P}_{39}$, $\left(a_{54}+\sqrt{d}\right)\mathcal{O}_K=P_1P_3\overline{P}_4\overline{P}_7\overline{P}_{11}P_{12}\overline{P}_{14}P_{47}$,

$\left(a_{55}+\sqrt{d}\right)\mathcal{O}_K=\overline{P}_2P_3\overline{P}_9P_{17}P_{28}\overline{P}_{31}P_{39}$, $\left(a_{56}+\sqrt{d}\right)\mathcal{O}_K=\overline{P}_3^2\overline{P}_4P_7\overline{P}_{17}\overline{P}_{47}^2$,
$\left(a_{57}+\sqrt{d}\right)\mathcal{O}_K=P_1\overline{P}_2^2\overline{P}_3\overline{P}_{22}P_{40}P_{50}\overline{P}_{53}$.

}

\medskip
Now, we show that the first equation in part D holds. By the equations in part B, we have that

\[
a_1+\sqrt{d}=-1516739c_1+e_1+\sqrt{d}\in c_1\mathcal{O}_K+\left(e_1+\sqrt{d}\right)\mathcal{O}_K=P_1\textnormal{ and }
\]

\[
a_1+\sqrt{d}=-131890c_3-\left(e_3-\sqrt{d}\right)\in c_3\mathcal{O}_K+\left(e_3-\sqrt{d}\right)\mathcal{O}_K=\overline{P}_3.
\]

Furthermore, we have that $a_1+\sqrt{d}=-121339c_2^2-\left(2-\sqrt{d}\right)\in c_2^2\mathcal{O}_K+\left(2-\sqrt{d}\right)\mathcal{O}_K=\overline{P}_2^2$ by the equations in parts A and B. Along the same lines it can be proved that $a_1+\sqrt{d}\in P_5\cap\overline{P}_6\cap\overline{P}_7\cap P_{43}\cap\overline{P}_{53}$.

\smallskip
This implies that $a_1+\sqrt{d}\in P_1\cap\overline{P}_2^2\cap\overline{P}_3\cap P_5\cap\overline{P}_6\cap\overline{P}_7\cap P_{43}\cap\overline{P}_{53}$. Therefore, $\left(a_1+\sqrt{d}\right)\mathcal{O}_K\subseteq P_1\cap\overline{P}_2^2\cap\overline{P}_3\cap P_5\cap\overline{P}_6\cap\overline{P}_7\cap P_{43}\cap\overline{P}_{53}=P_1\overline{P}_2^2\overline{P}_3P_5\overline{P}_6\overline{P}_7P_{43}\overline{P}_{53}$ (since the powers of the maximal ideals in the aforementioned intersection are pairwise comaximal).

\medskip
We infer by the equations in part C that
\begin{align*}
{\rm N}\left(\left(a_1+\sqrt{d}\right)\mathcal{O}_K\right)=&|{\rm N}\left(a_1+\sqrt{d}\right)|=d-a_1^2=c_1c_2^2c_3c_5c_6c_7c_{43}c_{53}\\
=&{\rm N}\left(P_1\right){\rm N}\left(\overline{P}_2\right)^2{\rm N}\left(\overline{P}_3\right){\rm N}\left(P_5\right){\rm N}\left(\overline{P}_6\right){\rm N}\left(\overline{P}_7\right){\rm N}\left(P_{43}\right){\rm N}\left(\overline{P}_{53}\right)\\
=&{\rm N}\left(P_1\overline{P}_2^2\overline{P}_3P_5\overline{P}_6\overline{P}_7P_{43}\overline{P}_{53}\right).
\end{align*}

Therefore, $\left(a_1+\sqrt{d}\right)\mathcal{O}_K=P_1\overline{P}_2^2\overline{P}_3P_5\overline{P}_6\overline{P}_7P_{43}\overline{P}_{53}$.

\medskip
We want to emphasize that the equations in part D show that the maximal ideals $P_j$ and $\overline{P}_j$ for $j\in [1,s]$ are the only maximal ideals of $\mathcal{O}_K$ that can occur in the factorization of $\left(a_i+\sqrt{d}\right)\mathcal{O}_K$ into maximal ideals for each $i\in [1,r]$.

\medskip
Next, we determine the factorization of $z\mathcal{O}_K$ into maximal ideals of $\mathcal{O}_K$. Let $I$ be a nonzero ideal of $\mathcal{O}_K$ and let $P$ be maximal ideal of $\mathcal{O}_K$. In what follows, let ${\rm v}_P\left(I\right)$ be the unique nonnegative integer $k$ such that $I=P^kJ$ for some nonzero ideal $J$ of $\mathcal{O}_K$ with $J\nsubseteq P$. Note that the equations in part E below consist of two equalities. Each of the first equalities is an immediate consequence of the equations in part D, while each of the second equalities is (a consequence of) elementary arithmetic.

\medskip
\textbf{Equations, part E}
\medskip

{\tiny
${\rm v}_{P_1}\left(z\mathcal{O}_K\right)=b_1+b_7+b_8+b_9+b_{11}+b_{13}+b_{15}+b_{16}+b_{20}+b_{27}+b_{28}+b_{30}+b_{31}+b_{34}+b_{36}+b_{37}+b_{39}+b_{40}+b_{41}+b_{47}+b_{49}+$

\hspace*{15.2mm} $b_{51}+b_{53}+b_{54}+b_{57}=2d_1$,

${\rm v}_{P_2}\left(z\mathcal{O}_K\right)=b_4+3b_5+b_7+2b_9+b_{10}+2b_{14}+3b_{16}+2b_{18}+5b_{20}+2b_{21}+b_{23}+b_{26}+b_{27}+2b_{33}+b_{34}+b_{38}+b_{40}+b_{42}+b_{43}+$

\hspace*{15.2mm} $b_{45}+b_{52}=d_2$,

${\rm v}_{\overline{P}_2}\left(z\mathcal{O}_K\right)=2b_1+b_6+b_{12}+b_{13}+b_{15}+b_{17}+2b_{19}+b_{22}+b_{25}+b_{30}+b_{32}+b_{36}+2b_{41}+b_{44}+2b_{46}+b_{47}+b_{53}+b_{55}+2b_{57}=d_2$,

${\rm v}_{P_3}\left(z\mathcal{O}_K\right)=2b_2+b_3+b_5+b_6+2b_8+b_{12}+b_{15}+2b_{24}+b_{26}+b_{33}+b_{36}+b_{40}+b_{43}+b_{50}+b_{54}+b_{55}=d_3$,

${\rm v}_{\overline{P}_3}\left(z\mathcal{O}_K\right)=b_1+b_4+b_7+b_{11}+b_{13}+2b_{17}+b_{20}+2b_{27}+b_{32}+b_{42}+b_{47}+b_{49}+b_{51}+b_{52}+b_{53}+2b_{56}+b_{57}=d_3$,

${\rm v}_{P_4}\left(z\mathcal{O}_K\right)=b_8+b_{22}+b_{28}+b_{44}+2b_{46}+b_{48}+b_{53}=d_4$, ${\rm v}_{\overline{P}_4}\left(z\mathcal{O}_K\right)=b_{20}+b_{24}+b_{30}+2b_{37}+b_{45}+b_{54}+b_{56}=d_4$,

${\rm v}_{P_5}\left(z\mathcal{O}_K\right)=b_1+b_8+b_{12}+b_{16}+b_{23}+b_{25}+b_{40}+b_{48}=d_5$, ${\rm v}_{\overline{P}_5}\left(z\mathcal{O}_K\right)=b_7+b_{15}+b_{30}+b_{35}+b_{36}+b_{37}+b_{52}+b_{53}=d_5$,

${\rm v}_{P_6}\left(z\mathcal{O}_K\right)=b_9+b_{11}+b_{29}+b_{32}+b_{37}=d_6$, ${\rm v}_{\overline{P}_6}\left(z\mathcal{O}_K\right)=b_1+b_{31}+b_{33}+b_{38}+b_{41}+2b_{47}+b_{49}+b_{51}+b_{53}=d_6$,

${\rm v}_{P_7}\left(z\mathcal{O}_K\right)=b_4+b_{29}+b_{39}+2b_{50}+b_{56}=d_7$, ${\rm v}_{\overline{P}_7}\left(z\mathcal{O}_K\right)=b_1+b_2+b_{19}+b_{34}+b_{43}+b_{54}=d_7$, ${\rm v}_{P_8}\left(z\mathcal{O}_K\right)=b_{15}+b_{19}+b_{22}=d_8$,

${\rm v}_{\overline{P}_8}\left(z\mathcal{O}_K\right)=b_{24}+b_{33}+b_{50}=d_8$, ${\rm v}_{P_9}\left(z\mathcal{O}_K\right)=b_6+b_{14}+b_{25}+b_{29}+b_{50}=d_9$, ${\rm v}_{\overline{P}_9}\left(z\mathcal{O}_K\right)=b_2+b_{40}+b_{44}+b_{53}+b_{55}=d_9$,

${\rm v}_{P_{10}}\left(z\mathcal{O}_K\right)=b_{29}+b_{38}+b_{45}=d_{10}$, ${\rm v}_{\overline{P}_{10}}\left(z\mathcal{O}_K\right)=b_8+b_{13}+2b_{34}=d_{10}$, ${\rm v}_{P_{11}}\left(z\mathcal{O}_K\right)=b_{26}+b_{35}+b_{42}=d_{11}$, ${\rm v}_{\overline{P}_{11}}\left(z\mathcal{O}_K\right)=b_3+b_{54}=d_{11}$,

${\rm v}_{P_{12}}\left(z\mathcal{O}_K\right)=b_{35}+b_{37}+b_{54}=d_{12}$, ${\rm v}_{\overline{P}_{12}}\left(z\mathcal{O}_K\right)=b_4+b_5+b_7+b_{20}+b_{25}+b_{26}=d_{12}$, ${\rm v}_{P_{13}}\left(z\mathcal{O}_K\right)=b_{23}+b_{29}+b_{32}+b_{42}=d_{13}$,

${\rm v}_{\overline{P}_{13}}\left(z\mathcal{O}_K\right)=b_{19}+b_{53}=d_{13}$, ${\rm v}_{P_{14}}\left(z\mathcal{O}_K\right)=b_6+b_{10}+b_{39}=d_{14}$, ${\rm v}_{\overline{P}_{14}}\left(z\mathcal{O}_K\right)=b_{54}=d_{14}$, ${\rm v}_{P_{15}}\left(z\mathcal{O}_K\right)=b_{13}+b_{28}+b_{34}+b_{43}+b_{46}=d_{15}$,

${\rm v}_{\overline{P}_{15}}\left(z\mathcal{O}_K\right)=b_3+2b_{19}+2b_{38}=d_{15}$, ${\rm v}_{P_{16}}\left(z\mathcal{O}_K\right)=b_{29}=d_{16}$, ${\rm v}_{\overline{P}_{16}}\left(z\mathcal{O}_K\right)=b_2+b_{31}+b_{51}=d_{16}$, ${\rm v}_{P_{17}}\left(z\mathcal{O}_K\right)=b_{12}+b_{55}=d_{17}$,

${\rm v}_{\overline{P}_{17}}\left(z\mathcal{O}_K\right)=b_{30}+b_{31}+b_{56}=d_{17}$, ${\rm v}_{P_{18}}\left(z\mathcal{O}_K\right)=b_{16}+b_{25}+b_{27}=d_{18}$, ${\rm v}_{\overline{P}_{18}}\left(z\mathcal{O}_K\right)=b_{21}+b_{23}+b_{39}+b_{45}=d_{18}$,

${\rm v}_{P_{19}}\left(z\mathcal{O}_K\right)=b_{26}+b_{29}=d_{19}$, ${\rm v}_{\overline{P}_{19}}\left(z\mathcal{O}_K\right)=b_9+b_{21}+b_{33}=d_{19}$, ${\rm v}_{P_{20}}\left(z\mathcal{O}_K\right)=b_{39}=d_{20}$, ${\rm v}_{\overline{P}_{20}}\left(z\mathcal{O}_K\right)=2b_{22}=d_{20}$,

${\rm v}_{P_{21}}\left(z\mathcal{O}_K\right)=b_{31}+b_{44}+b_{46}=d_{21}$, ${\rm v}_{\overline{P}_{21}}\left(z\mathcal{O}_K\right)=b_{13}+2b_{18}+b_{41}=d_{21}$, ${\rm v}_{P_{22}}\left(z\mathcal{O}_K\right)=2b_{23}=d_{22}$, ${\rm v}_{\overline{P}_{22}}\left(z\mathcal{O}_K\right)=b_{28}+b_{52}+b_{57}=d_{22}$,

${\rm v}_{P_{23}}\left(z\mathcal{O}_K\right)=b_{14}+b_{16}+b_{37}+b_{44}=d_{23}$, ${\rm v}_{\overline{P}_{23}}\left(z\mathcal{O}_K\right)=b_6+b_{28}+b_{39}+b_{42}=d_{23}$, ${\rm v}_{P_{24}}\left(z\mathcal{O}_K\right)=b_{30}=d_{24}$, ${\rm v}_{\overline{P}_{24}}\left(z\mathcal{O}_K\right)=b_{50}=d_{24}$,

${\rm v}_{P_{25}}\left(z\mathcal{O}_K\right)=b_{13}+b_{28}+b_{40}=d_{25}$, ${\rm v}_{\overline{P}_{25}}\left(z\mathcal{O}_K\right)=b_{11}+b_{52}=d_{25}$, ${\rm v}_{P_{26}}\left(z\mathcal{O}_K\right)=b_{32}+b_{36}=d_{26}$, ${\rm v}_{\overline{P}_{26}}\left(z\mathcal{O}_K\right)=b_{20}+b_{24}=d_{26}$,

${\rm v}_{P_{27}}\left(z\mathcal{O}_K\right)=b_{46}=d_{27}$, ${\rm v}_{\overline{P}_{27}}\left(z\mathcal{O}_K\right)=b_2+b_{49}=d_{27}$, ${\rm v}_{P_{28}}\left(z\mathcal{O}_K\right)=b_{22}+b_{36}+b_{55}=d_{28}$, ${\rm v}_{\overline{P}_{28}}\left(z\mathcal{O}_K\right)=b_{12}+b_{45}+b_{48}=d_{28}$,

${\rm v}_{P_{29}}\left(z\mathcal{O}_K\right)=b_{21}+b_{27}=d_{29}$, ${\rm v}_{\overline{P}_{29}}\left(z\mathcal{O}_K\right)=b_{36}=d_{29}$, ${\rm v}_{P_{30}}\left(z\mathcal{O}_K\right)=b_{25}+b_{30}+b_{49}=d_{30}$, ${\rm v}_{\overline{P}_{30}}\left(z\mathcal{O}_K\right)=b_{14}+b_{17}+b_{34}+b_{41}+b_{47}=d_{30}$,

${\rm v}_{P_{31}}\left(z\mathcal{O}_K\right)=b_7+b_{17}=d_{31}$, ${\rm v}_{\overline{P}_{31}}\left(z\mathcal{O}_K\right)=b_{45}+b_{55}=d_{31}$, ${\rm v}_{P_{32}}\left(z\mathcal{O}_K\right)=b_{43}+b_{45}=d_{32}$, ${\rm v}_{\overline{P}_{32}}\left(z\mathcal{O}_K\right)=b_{36}=d_{32}$,

${\rm v}_{P_{33}}\left(z\mathcal{O}_K\right)=b_{18}+b_{27}+b_{44}=d_{33}$, ${\rm v}_{\overline{P}_{33}}\left(z\mathcal{O}_K\right)=b_{16}+b_{24}=d_{33}$, ${\rm v}_{P_{34}}\left(z\mathcal{O}_K\right)=b_{33}+b_{44}=d_{34}$, ${\rm v}_{\overline{P}_{34}}\left(z\mathcal{O}_K\right)=b_{15}+b_{17}=d_{34}$,

${\rm v}_{P_{35}}\left(z\mathcal{O}_K\right)=b_6=d_{35}$, ${\rm v}_{\overline{P}_{35}}\left(z\mathcal{O}_K\right)=b_{23}=d_{35}$, ${\rm v}_{P_{36}}\left(z\mathcal{O}_K\right)=b_{26}+2b_{31}=d_{36}$, ${\rm v}_{\overline{P}_{36}}\left(z\mathcal{O}_K\right)=b_{11}+b_{34}=d_{36}$,

${\rm v}_{P_{37}}\left(z\mathcal{O}_K\right)=b_4+b_{10}+b_{15}+b_{18}+b_{19}=d_{37}$, ${\rm v}_{\overline{P}_{37}}\left(z\mathcal{O}_K\right)=b_{11}=d_{37}$, ${\rm v}_{P_{38}}\left(z\mathcal{O}_K\right)=b_{43}=d_{38}$, ${\rm v}_{\overline{P}_{38}}\left(z\mathcal{O}_K\right)=b_7+b_{21}=d_{38}$,

${\rm v}_{P_{39}}\left(z\mathcal{O}_K\right)=b_5+b_9+b_{35}+b_{42}+b_{46}+b_{48}+b_{55}=d_{39}$, ${\rm v}_{\overline{P}_{39}}\left(z\mathcal{O}_K\right)=b_3+b_{38}+b_{53}=d_{39}$, ${\rm v}_{P_{40}}\left(z\mathcal{O}_K\right)=b_4+b_{47}+b_{57}=d_{40}$,

${\rm v}_{\overline{P}_{40}}\left(z\mathcal{O}_K\right)=b_{38}=d_{40}$, ${\rm v}_{P_{41}}\left(z\mathcal{O}_K\right)=b_{20}=d_{41}$, ${\rm v}_{\overline{P}_{41}}\left(z\mathcal{O}_K\right)=b_4=d_{41}$, ${\rm v}_{P_{42}}\left(z\mathcal{O}_K\right)=b_{52}=d_{42}$, ${\rm v}_{\overline{P}_{42}}\left(z\mathcal{O}_K\right)=b_{30}=d_{42}$,

${\rm v}_{P_{43}}\left(z\mathcal{O}_K\right)=b_1=d_{43}$, ${\rm v}_{\overline{P}_{43}}\left(z\mathcal{O}_K\right)=b_{16}=d_{43}$, ${\rm v}_{P_{44}}\left(z\mathcal{O}_K\right)=b_{13}+b_{51}=d_{44}$, ${\rm v}_{\overline{P}_{44}}\left(z\mathcal{O}_K\right)=b_{12}=d_{44}$, ${\rm v}_{P_{45}}\left(z\mathcal{O}_K\right)=b_{51}=d_{45}$,

${\rm v}_{\overline{P}_{45}}\left(z\mathcal{O}_K\right)=b_{18}+b_{40}=d_{45}$, ${\rm v}_{P_{46}}\left(z\mathcal{O}_K\right)=b_7+b_{10}+b_{32}+b_{33}+b_{35}+b_{49}+b_{52}=d_{46}$, ${\rm v}_{\overline{P}_{46}}\left(z\mathcal{O}_K\right)=b_2+b_9+b_{42}=d_{46}$,

${\rm v}_{P_{47}}\left(z\mathcal{O}_K\right)=b_6+b_{50}+b_{54}=d_{47}$, ${\rm v}_{\overline{P}_{47}}\left(z\mathcal{O}_K\right)=b_{43}+2b_{56}=d_{47}$, ${\rm v}_{P_{48}}\left(z\mathcal{O}_K\right)=b_9=d_{48}$, ${\rm v}_{\overline{P}_{48}}\left(z\mathcal{O}_K\right)=b_5+b_{12}=d_{48}$,

${\rm v}_{P_{49}}\left(z\mathcal{O}_K\right)=b_{27}+b_{41}=d_{49}$, ${\rm v}_{\overline{P}_{49}}\left(z\mathcal{O}_K\right)=b_{14}+b_{40}+b_{48}=d_{49}$, ${\rm v}_{P_{50}}\left(z\mathcal{O}_K\right)=b_{57}=d_{50}$, ${\rm v}_{\overline{P}_{50}}\left(z\mathcal{O}_K\right)=b_{24}+b_{49}=d_{50}$,

${\rm v}_{P_{51}}\left(z\mathcal{O}_K\right)=b_5+b_{22}=d_{51}$, ${\rm v}_{\overline{P}_{51}}\left(z\mathcal{O}_K\right)=b_{41}=d_{51}$, ${\rm v}_{P_{52}}\left(z\mathcal{O}_K\right)=b_3+b_{17}+b_{21}+b_{32}+b_{48}=d_{52}$, ${\rm v}_{\overline{P}_{52}}\left(z\mathcal{O}_K\right)=b_8+b_{10}=d_{52}$,

${\rm v}_{P_{53}}\left(z\mathcal{O}_K\right)=b_3+b_{11}=d_{53}$, ${\rm v}_{\overline{P}_{53}}\left(z\mathcal{O}_K\right)=b_1+b_{14}+b_{51}+b_{57}=d_{53}$, ${\rm v}_{P_{54}}\left(z\mathcal{O}_K\right)=b_8+b_{39}=d_{54}$, ${\rm v}_{\overline{P}_{54}}\left(z\mathcal{O}_K\right)=b_{25}=d_{54}$,

${\rm v}_{P_{55}}\left(z\mathcal{O}_K\right)=b_{15}+b_{28}=d_{55}$, ${\rm v}_{\overline{P}_{55}}\left(z\mathcal{O}_K\right)=b_{10}+b_{35}=d_{55}$, ${\rm v}_{P_{56}}\left(z\mathcal{O}_K\right)=b_{37}=d_{56}$, ${\rm v}_{\overline{P}_{56}}\left(z\mathcal{O}_K\right)=b_{26}+b_{47}=d_{56}$.

}

\medskip
The equations in part E show that ${\rm v}_{P_1}\left(z\mathcal{O}_K\right)=2d_1={\rm v}_{P_1}\left(n\mathcal{O}_K\right)$ and ${\rm v}_{P_j}\left(z\mathcal{O}_K\right)={\rm v}_{\overline{P}_j}\left(z\mathcal{O}_K\right)=d_j={\rm v}_{\overline{P}_j}\left(n\mathcal{O}_K\right)={\rm v}_{P_j}\left(n\mathcal{O}_K\right)$ for each $j\in [2,s]$, and thus $\eta\mathcal{O}_K=\mathcal{O}_K$. We infer that $\eta\in\mathcal{O}_K^{\times}$.\qed(Claim 2)

\medskip
\textsc{Claim} 3: $\eta\in\mathcal{O}_d^{\times}$.

\medskip
Observe that $d>c_j$ and $c_j\in\mathbb{P}$ for each $j\in [1,s]$. It follows that $d\nmid n$ (since $d\in\mathbb{P}$ by Claim 1). Since $z\in\mathcal{O}_K$, there are some $g,h\in\mathbb{Z}$ such that $z=g+h\sqrt{d}$. Moreover, since $\eta\in\mathcal{O}_K$ (by Claim 2) and $\{1,\sqrt{d}\}$ is a $\mathbb{Z}$-basis of $\mathcal{O}_K$, we obtain that $n\mid g$, $n\mid h$ and $\eta=\frac{g}{n}+\frac{h}{n}\sqrt{d}$. Clearly, $\mathcal{O}_K^{\times}\cap\mathcal{O}_d=\mathcal{O}_d^{\times}$. Since $d\nmid n$, $d\in\mathbb{P}$ and $\eta\in\mathcal{O}_K^{\times}$ (by Claim 2), we infer that $\eta\in\mathcal{O}_d^{\times}$ if and only if $\eta\in\mathcal{O}_d$ if and only if $d\mid\frac{h}{n}$ if and only if $d\mid h$ if and only if $z\in\mathcal{O}_d$. Consequently, it remains to show that $z\in\mathcal{O}_d$.

\smallskip
Set $z^{\prime}=\prod_{i=1}^r\left(a_i+b_i\sqrt{d}\right)$. Since $\left(u+\sqrt{d}\right)^v\equiv u^{v-1}\left(u+v\sqrt{d}\right)\mod d\mathcal{O}_K$ for all $u\in\mathbb{Z}$ and $v\in\mathbb{N}$, there is some $t\in\mathbb{Z}$ such that $z\equiv tz^{\prime}\mod d\mathcal{O}_K$. For this reason, it is sufficient to show that $z^{\prime}\in\mathcal{O}_d$. Next we determine $\prod_{i=1}^k\left(a_i+b_i\sqrt{d}\right)$ modulo $d\mathcal{O}_K$ step-by-step for each $k\in [2,r]$. Thereby, we use that $\left(v+w\sqrt{d}\right)\left(v^{\prime}+w^{\prime}\sqrt{d}\right)\equiv vv^{\prime}+\left(wv^{\prime}+vw^{\prime}\right)\sqrt{d}\mod d\mathcal{O}_K$ for each $v,v^{\prime},w,w^{\prime}\in\mathbb{Z}$. Note that the items below consist of two equations that give us the reductions of $u_k$ and $v_k$ modulo $d$ (for each $k\in [2,r]$), where $u_k,v_k\in\mathbb{Z}$ are such that $\prod_{i=1}^k\left(a_i+b_i\sqrt{d}\right)=u_k+v_k\sqrt{d}$.

\medskip

{\tiny
$\bullet$ $a_1a_2=6890530871592$,\hspace*{34.5mm} $b_1a_2+a_1b_2=-d+21554509784529$.

$\bullet$ $6890530871592a_3=-315587d+28244816980645$,\hspace*{4.9mm} $21554509784529a_3+6890530871592b_3=-972567d+5178868509861$.

$\bullet$ $28244816980645a_4=-1272650d+15513835265240$,\hspace*{2.3mm} $5178868509861a_4+28244816980645b_4=2697384d+4467486423321$.

$\bullet$ $15513835265240a_5=-675500d+35811027000980$,\hspace*{3.6mm} $4467486423321a_5+15513835265240b_5=652149d+30537129644761$.

$\bullet$ $35811027000980a_6=-1309781d+20714594938939$,\hspace*{2.3mm} $30537129644761a_6+35811027000980b_6=6236863d+16497568105021$.

$\bullet$ $20714594938939a_7=-441941d+21223637681572$,\hspace*{3.6mm} $16497568105021a_7+20714594938939b_7=28697d+18837954382492$.

$\bullet$ $21223637681572a_8=-407041d+13338881989779$,\hspace*{3.6mm} $18837954382492a_8+21223637681572b_8=4721606d+17458165870498$.

$\bullet$ $13338881989779a_9=-230639d+3661427934964$,\hspace*{4.9mm} $17458165870498a_9+13338881989779b_9=5070046d+17002573835617$.

$\bullet$ $3661427934964a_{10}=-52048d+15123266299360$,\hspace*{4.9mm} $17002573835617a_{10}+3661427934964b_{10}=130052d+34143299535672$.

$\bullet$ $15123266299360a_{11}=-202084d+22867483171996$,\hspace*{2.3mm} $34143299535672a_{11}+15123266299360b_{11}=6208881d+10414810162433$.

$\bullet$ $22867483171996a_{12}=-282499d+8083910734589$,\hspace*{3.6mm} $10414810162433a_{12}+22867483171996b_{12}=7832662d+17492146626900$.

$\bullet$ $8083910734589a_{13}=-92186d+37797607362521$,\hspace*{4.9mm} $17492146626900a_{13}+8083910734589b_{13}=1575020d+23141664104777$.

$\bullet$ $37797607362521a_{14}=-411842d+28944089196110$,\hspace*{2.3mm} $23141664104777a_{14}+37797607362521b_{14}=1939038d+6609389335501$.

$\bullet$ $28944089196110a_{15}=-239446d+16720583049264$,\hspace*{2.3mm} $6609389335501a_{15}+28944089196110b_{15}=2568152d+34718516820815$.

$\bullet$ $16720583049264a_{16}=-120376d+6912281518232$,\hspace*{3.6mm} $34718516820815a_{16}+16720583049264b_{16}=1211229d+10227378309170$.

$\bullet$ $6912281518232a_{17}=-44790d+5195414457466$,\hspace*{6.2mm} $10227378309170a_{17}+6912281518232b_{17}=741048d+289652693936$.

$\bullet$ $5195414457466a_{18}=-30152d+22465854532740$,\hspace*{4.9mm} $289652693936a_{18}+5195414457466b_{18}=265509d+17460040771835$.

$\bullet$ $22465854532740a_{19}=-123878d+18660863010082$,\hspace*{2.3mm} $17460040771835a_{19}+22465854532740b_{19}=2001631d+19162392372661$.

$\bullet$ $18660863010082a_{20}=-99441d+31621822715453$,\hspace*{3.6mm} $19162392372661a_{20}+18660863010082b_{20}=1834171d+26774899887696$.

$\bullet$ $31621822715453a_{21}=-167069d+18945559551357$,\hspace*{2.3mm} $26774899887696a_{21}+31621822715453b_{21}=4282685d+14809237415832$.

$\bullet$ $18945559551357a_{22}=-93807d+25490759313159$,\hspace*{3.6mm} $14809237415832a_{22}+18945559551357b_{22}=1035530d+16642252647821$.

$\bullet$ $25490759313159a_{23}=-82380d+31410566401668$,\hspace*{3.6mm} $16642252647821a_{23}+25490759313159b_{23}=5180714d+7665156542464$.

$\bullet$ $31410566401668a_{24}=-97899d+24750475711101$,\hspace*{3.6mm} $7665156542464a_{24}+31410566401668b_{24}=4386896d+12345706811136$.

$\bullet$ $24750475711101a_{25}=-34513d+4341133124105$,\hspace*{4.9mm} $12345706811136a_{25}+24750475711101b_{25}=8807546d+33705248273090$.

$\bullet$ $4341133124105a_{26}=-5385d+20420906899575$,\hspace*{6.2mm} $33705248273090a_{26}+4341133124105b_{26}=888730d+19138455539665$.

$\bullet$ $20420906899575a_{27}=-21329d+11628413963866$,\hspace*{3.6mm} $19138455539665a_{27}+20420906899575b_{27}=432256d+14371072675706$.

$\bullet$ $11628413963866a_{28}=-10322d+27536499676132$,\hspace*{3.6mm} $14371072675706a_{28}+11628413963866b_{28}=2230673d+20483449818325$.

$\bullet$ $27536499676132a_{29}=-16214d+11871313871146$,\hspace*{3.6mm} $20483449818325a_{29}+27536499676132b_{29}=10491877d+23841147446157$.

$\bullet$ $11871313871146a_{30}=1168d+36708968638606$,\hspace*{6.9mm} $23841147446157a_{30}+11871313871146b_{30}=583202d+29824187122383$.

$\bullet$ $36708968638606a_{31}=27670d+35292993605984$,\hspace*{5.6mm} $29824187122383a_{31}+36708968638606b_{31}=6287182d+30828612734725$.

$\bullet$ $35292993605984a_{32}=33791d+32099367907223$,\hspace*{5.6mm} $30828612734725a_{32}+35292993605984b_{32}=2928174d+15667131309718$.

$\bullet$ $32099367907223a_{33}=31502d+8686497691888$,\hspace*{6.9mm} $15667131309718a_{33}+32099367907223b_{33}=1722251d+13069916858845$.

$\bullet$ $8686497691888a_{34}=17842d+32178145784978$,\hspace*{6.9mm} $13069916858845a_{34}+8686497691888b_{34}=1025368d+38021671372563$.

$\bullet$ $32178145784978a_{35}=68542d+38082283375434$,\hspace*{5.6mm} $38021671372563a_{35}+32178145784978b_{35}=5937837d+11732682959923$.

$\bullet$ $38082283375434a_{36}=100145d+35965182833267$,\hspace*{4.3mm} $11732682959923a_{36}+38082283375434b_{36}=6202247d+37211608563518$.

$\bullet$ $35965182833267a_{37}=106375d+33169240097520$,\hspace*{4.3mm} $37211608563518a_{37}+35965182833267b_{37}=8936948d+26020836879171$.

$\bullet$ $33169240097520a_{38}=99259d+17708055977779$,\hspace*{5.6mm} $26020836879171a_{38}+33169240097520b_{38}=10150666d+32544641385778$.

$\bullet$ $17708055977779a_{39}=70781d+30770548150680$,\hspace*{5.6mm} $32544641385778a_{39}+17708055977779b_{39}=2202939d+36915064492287$.

$\bullet$ $30770548150680a_{40}=155955d+11904176271315$,\hspace*{4.3mm} $36915064492287a_{40}+30770548150680b_{40}=2562988d+165258901717$.

$\bullet$ $11904176271315a_{41}=64624d+25508831080099$,\hspace*{5.6mm} $165258901717a_{41}+11904176271315b_{41}=1347305d+5907415462461$.

$\bullet$ $25508831080099a_{42}=143454d+872140076252$,\hspace*{6.9mm} $5907415462461a_{42}+25508831080099b_{42}=790654d+34225185690977$.

$\bullet$ $872140076252a_{43}=5137d+10987048845161$,\hspace*{9.5mm} $34225185690977a_{43}+872140076252b_{43}=339647d+26138731918415$.

$\bullet$ $10987048845161a_{44}=74345d+8152289751913$,\hspace*{6.9mm} $26138731918415a_{44}+10987048845161b_{44}=1871482d+38377823723539$.

$\bullet$ $8152289751913a_{45}=56296d+17399000858672$,\hspace*{6.9mm} $38377823723539a_{45}+8152289751913b_{45}=295750d+32817075202061$.

$\bullet$ $17399000858672a_{46}=140874d+13424492118010$,\hspace*{4.3mm} $32817075202061a_{46}+17399000858672b_{46}=2189576d+33163475056782$.

$\bullet$ $13424492118010a_{47}=116689d+22068166729399$,\hspace*{4.3mm} $33163475056782a_{47}+13424492118010b_{47}=705436d+24070914532262$.

$\bullet$ $22068166729399a_{48}=266604d+20874064459588$,\hspace*{4.3mm} $24070914532262a_{48}+22068166729399b_{48}=292240d+15318085049644$.

$\bullet$ $20874064459588a_{49}=267012d+25167892634080$,\hspace*{4.3mm} $15318085049644a_{49}+20874064459588b_{49}=789145d+30753673516637$.

$\bullet$ $25167892634080a_{50}=384789d+20538358542749$,\hspace*{4.3mm} $30753673516637a_{50}+25167892634080b_{50}=1701637d+23065592907429$.

$\bullet$ $20538358542749a_{51}=591526d+21236595367747$,\hspace*{4.3mm} $23065592907429a_{51}+20538358542749b_{51}=3306393d+31259701520158$.

$\bullet$ $21236595367747a_{52}=628169d+663951808753$,\hspace*{6.9mm} $31259701520158a_{52}+21236595367747b_{52}=1963739d+37377022803680$.

$\bullet$ $663951808753a_{53}=20667d+11352564579816$,\hspace*{8.2mm} $37377022803680a_{53}+663951808753b_{53}=1565314d+35652369557844$.

$\bullet$ $11352564579816a_{54}=408337d+12677902910401$,\hspace*{4.3mm} $35652369557844a_{54}+11352564579816b_{54}=6095135d+24252482581547$.

$\bullet$ $12677902910401a_{55}=632652d+37493335194470$,\hspace*{4.3mm} $24252482581547a_{55}+12677902910401b_{55}=2876153d+28715685383254$.

$\bullet$ $37493335194470a_{56}=1912828d+27624385992608$,\hspace*{3mm} $28715685383254a_{56}+37493335194470b_{56}=11211960d+33470792628624$.

$\bullet$ $27624385992608a_{57}=1505206d+34038147456710$,\hspace*{3mm} $33470792628624a_{57}+27624385992608b_{57}=6487920d$.

}

\medskip
Consequently, $z^{\prime}\equiv 34038147456710\mod d\mathcal{O}_K$, and thus $z^{\prime}\in\mathcal{O}_d$.\qed(Claim 3)

\medskip
\textsc{Claim} 4: $\eta\not=1$.

\medskip
First, we show that $z\equiv\sqrt{d}\mod 3\mathcal{O}_K$. We have that $d\equiv 2\mod 3\mathcal{O}_K$ and $\left(0+\sqrt{d}\right)^2\equiv\left(1+\sqrt{d}\right)^4\equiv\left(2+\sqrt{d}\right)^4\equiv 2\mod 3\mathcal{O}_K$. Moreover, $\left(0+\sqrt{d}\right)^4\equiv\left(1+\sqrt{d}\right)^8\equiv\left(2+\sqrt{d}\right)^8\equiv 1\mod 3\mathcal{O}_K$. Using this, we infer that

{\tiny
\begin{align*}
z\equiv &\left(0+\sqrt{d}\right)^3\left(2+\sqrt{d}\right)^6\left(0+\sqrt{d}\right)^2\left(1+\sqrt{d}\right)^3\left(2+\sqrt{d}\right)^6\left(0+\sqrt{d}\right)^2\left(0+\sqrt{d}\right)^0\left(1+\sqrt{d}\right)^2\left(0+\sqrt{d}\right)^3\left(2+\sqrt{d}\right)^6\left(2+\sqrt{d}\right)^2\left(0+\sqrt{d}\right)^1\\
&\left(2+\sqrt{d}\right)^5\left(2+\sqrt{d}\right)^7\left(2+\sqrt{d}\right)^1\left(1+\sqrt{d}\right)^3\left(2+\sqrt{d}\right)^2\left(2+\sqrt{d}\right)^3\left(0+\sqrt{d}\right)^1\left(2+\sqrt{d}\right)^3\left(1+\sqrt{d}\right)^7\left(0+\sqrt{d}\right)^3\left(1+\sqrt{d}\right)^2\left(1+\sqrt{d}\right)^4\\
&\left(1+\sqrt{d}\right)^0\left(0+\sqrt{d}\right)^3\left(1+\sqrt{d}\right)^3\left(0+\sqrt{d}\right)^3\left(0+\sqrt{d}\right)^1\left(0+\sqrt{d}\right)^3\left(1+\sqrt{d}\right)^7\left(0+\sqrt{d}\right)^1\left(1+\sqrt{d}\right)^2\left(1+\sqrt{d}\right)^3\left(1+\sqrt{d}\right)^2\left(0+\sqrt{d}\right)^2\\
&\left(1+\sqrt{d}\right)^7\left(2+\sqrt{d}\right)^1\left(1+\sqrt{d}\right)^6\left(2+\sqrt{d}\right)^5\left(1+\sqrt{d}\right)^5\left(2+\sqrt{d}\right)^3\left(2+\sqrt{d}\right)^3\left(1+\sqrt{d}\right)^1\left(1+\sqrt{d}\right)^7\left(2+\sqrt{d}\right)^0\left(0+\sqrt{d}\right)^0\left(1+\sqrt{d}\right)^4\\
&\left(1+\sqrt{d}\right)^2\left(2+\sqrt{d}\right)^7\left(0+\sqrt{d}\right)^0\left(2+\sqrt{d}\right)^7\left(0+\sqrt{d}\right)^2\left(2+\sqrt{d}\right)^6\left(2+\sqrt{d}\right)^7\left(2+\sqrt{d}\right)^6\left(2+\sqrt{d}\right)^6\\
\equiv &\left(0+\sqrt{d}\right)^2\left(1+\sqrt{d}\right)^6\left(2+\sqrt{d}\right)^4\equiv 2^3\left(1+\sqrt{d}\right)^2\equiv\sqrt{d}\mod 3\mathcal{O}_K.
\end{align*}
}

Since $z\equiv\sqrt{d}\mod 3\mathcal{O}_K$, it follows that $z\not\in\mathbb{Z}$, and hence $\eta\not=1$.\qed(Claim 4)

\medskip
It remains to show that $d\mid y$ (where $y$ was defined at the beginning of Section~\ref{3}). Observe that for each $j\in [1,r]$, $1\leq -3033477+\sqrt{d}\leq a_j+\sqrt{d}\leq 2126573+\sqrt{d}\leq 2^{100}$. Moreover, $\max\{b_i\mid i\in [1,r]\}=23621570\leq 10^8$ and $r\leq 100$, and thus

\[
\eta\leq\prod_{i=1}^r\left(a_i+\sqrt{d}\right)^{b_i}\leq\prod_{i=1}^r\left(2^{100}\right)^{10^8}=2^{10^{10}r}\leq 2^{10^{12}}<2^d\leq\left(1+\sqrt{d}\right)^d\leq\varepsilon^d.
\]

Also note that for each $j\in [1,s]$, $c_j\leq 2^{10}$. Furthermore, $\max\{d_i\mid i\in [1,s]\}=146634276\leq 10^9$ and $s\leq 100$, and hence

\[
\eta^{-1}\leq\prod_{j=1}^s c_j^{d_j}\leq\prod_{j=1}^s\left(2^{10}\right)^{10^9}=2^{10^{10}s}\leq 2^{10^{12}}<2^d\leq\left(1+\sqrt{d}\right)^d\leq\varepsilon^d.
\]

It follows from Claims 2 and 4 that $\eta\in\mathcal{O}_K^{\times}\setminus\{1\}$. Since $\varepsilon^{-d}<\eta<\varepsilon^d$, there is some $k\in\mathbb{Z}\setminus\{0\}$ such that $-d<k<d$ and $\eta=\varepsilon^k$. Set ${\rm ord}\left(\varepsilon\mathcal{O}_d^{\times}\right)=\min\{m\in\mathbb{N}\mid\left(\varepsilon\mathcal{O}_d^{\times}\right)^m=\mathcal{O}_d^{\times}\}$. Observe that ${\rm ord}\left(\varepsilon\mathcal{O}_d^{\times}\right)=\left(\mathcal{O}_K^{\times}:\mathcal{O}_d^{\times}\right)\mid d$ (because $\mathcal{O}_K^{\times}/\mathcal{O}_d^{\times}$ is generated by $\varepsilon\mathcal{O}_d^{\times}$ and $d\in\mathbb{P}$ is ramified in $\mathcal{O}_K$). Since $\left(\varepsilon\mathcal{O}_d^{\times}\right)^k=\eta\mathcal{O}_d^{\times}=\mathcal{O}_d^{\times}$ by Claim 3, we have that ${\rm ord}\left(\varepsilon\mathcal{O}_d^{\times}\right)\mid k$, and thus $\left(\mathcal{O}_K^{\times}:\mathcal{O}_d^{\times}\right)=1$ (since $d\in\mathbb{P}$). We infer that $\varepsilon\in\mathcal{O}_d$, and hence $d\mid y$.\qed(Theorem~\ref{Theorem 1.3})

\bigskip
\noindent {\bf ACKNOWLEDGEMENTS.}
We want to thank A. Geroldinger for helpful remarks and comments that improved this note.

\end{document}